\newtheorem{theorem}{Theorem}[section]
\newtheorem{lemma}[theorem]{Lemma}
\newtheorem{definition}[theorem]{Definition}
\newtheorem{proposition}[theorem]{Proposition}
\newtheorem{remark}[theorem]{Remark}
\newtheorem{corollary}[theorem]{Corollary}
\newtheorem{example}[theorem]{Example}
\numberwithin{equation}{section}
\DeclareMathOperator*{\argmin}{argmin} 
\DeclareMathOperator*{\supp}{supp}
\DeclareMathOperator*{\esssup}{ess\ sup}
\title{\bf Long Time Behavior of First Order Mean Field Games on Euclidean Space}
\author{{\sc Piermarco Cannarsa}\footnote{%
		Dipartimento di Matematica, Universit\`a di Roma "Tor Vergata" - {\tt cannarsa@mat.uniroma2.it}}, \ 
	{\sc Wei Cheng}\footnote{%
		Department of Mathematics, Nanjing University, Nanjing 210093, China - {\tt chengwei@nju.edu.cn}}, \ 
	{\sc Cristian Mendico}\footnote{%
		Dipartimento di Matematica, Universit\`a di Roma "Tor Vergata" - {\tt cristian.mendico@gmail.com}}, \ 
	{\sc Kaizhi Wang}\footnote{%
		School of Mathematical Sciences, Shanghai Jiao Tong University, Shanghai 200240, China - {\tt kzwang@sjtu.edu.cn}}
	\\}
\date{\today}
\begin{document}
	\maketitle
	\begin{abstract}
		The aim of this paper is to study the long time behavior of solutions to deterministic mean field games systems on     Euclidean space. This problem was addressed on the torus ${\mathbb T}^n$ in [P. Cardaliaguet, {\it Long time average of first order mean field games and weak KAM theory}, Dyn. Games Appl. 3 (2013), 473--488], where solutions are shown to converge to the solution of a certain ergodic mean field games system on ${\mathbb T}^n$. By adapting the approach in [A. Fathi, E. Maderna, {\it Weak KAM theorem on non compact manifolds}, NoDEA Nonlinear Differential Equations Appl. 14 (2007), 1--27], we identify structural conditions on the Lagrangian, under which the corresponding ergodic system can be solved in $\mathbb{R}^{n}$. Then we show that time dependent solutions converge to the solution of such a stationary system on all compact subsets of the whole space.  \\\\\\
		\textit{Keywords:} Mean field games; weak KAM theory; long time behavior\\\\
		\textit{2010\ Mathematics\ Subject Classification:} 35A01; 35B40; 35F21
		\end{abstract}

\tableofcontents		

\section{Introduction}
In this paper we study the relationship between solutions of the first order mean field games (hereinafter referred to as MFG) system  with finite horizon 
\begin{equation}\label{lab1}
\begin{cases}
\ -\partial _{t} u^{T} + H(x, Du^{T})=F(x, m^{T}(t)) & \text{in} \quad (0,T)\times\mathbb{R}^{n}, \\ \  \partial _{t}m^{T}-\text{div}\Big(m^{T}D_{p}H(x, Du^{T})\Big)=0  & \text{in} \quad (0,T)\times\mathbb{R}^{n},  \\ \ m^{T}(0)=m_{0}, \quad u^{T}(T,x)=u^{f}(x), & x\in\mathbb{R}^{n},
\end{cases}
\end{equation}
and solutions of the ergodic first order MFG system
\begin{equation}\label{lab2}
\begin{cases}
\  H(x, D\bar u)=F(x, \bar m)+\bar\lambda & \text{in} \quad \mathbb{R}^{n}, \\ \ \text{div}\Big(\bar m D_{p}H(x, D\bar u)\Big)=0  & \text{in} \quad \mathbb{R}^{n},  \\ \int_{\mathbb{R}^{n}}{\bar m (dx)}=1,
\end{cases}
\end{equation}
where $0<T<+\infty$ and $H$ is a reversible strict Tonelli Hamiltonian on $\mathbb{R}^{n} \times \mathbb{R}^{n}$. More precisely, we will study the long time behavior of the solution of system \eqref{lab1} by showing  that it converges to a  solution of system \eqref{lab2} in some weak sense.

MFG theory was introduced independently by Lasry-Lions \cite{bib:LL1}, \cite{bib:LL2}, \cite{bib:LL3} and Huang, Malham\'e, and Caines \cite{bib:HCM1}, \cite{bib:HCM2} in order to study large population deterministic and stochastic differential games. 
 In system \eqref{lab1}, the function $u^{T}$ can be understood as the value function for a typical small player of a finite horizon optimal control problem in which the density $m^{T}$ of the other players enters as a datum. Moreover, the players density evolves in time, according to the second equation of the system, following the vector field given by the optimal feedback of each agent.

Our analysis is partially based on tools from weak KAM theory for Lagrangians defined on the tangent bundle of $\mathbb{R}^{n}$. Fathi \cite{bib:AF} proved the existence of solutions for stationary Hamilton-Jacobi equations, for Lagrangians defined on the tangent bundle of a compact smooth manifold, generalizing the existence result due to Lions, Papanicolaou and Varadhan \cite{bib:LPV}. Later, Fathi and Maderna \cite{bib:FM} extended this existence result to noncompact manifolds. Moreover, they showed that backward weak KAM solutions coincide with viscosity solutions.

When the state space is the flat torus $\mathbb{T}^n$, the asymptotic behavior as $T\to+\infty$ of solutions to the MFG system \eqref{lab1} was studied by Cardaliaguet \cite{bib:CAR}. In this paper, we remove such a compactness assumption and address the convergence problem as $T\to+\infty$ for solutions of \eqref{lab1} on the whole space $\mathbb{R}^n$.
The first step of our analysis is to prove the existence of solutions of system \eqref{lab2} as well as  the uniqueness of the corresponding critical value
 (Theorem \ref{MR1} below). A key point, here, is  
the regularity of viscosity solutions of the first equation of system \eqref{lab2} on the projected Mather set. 
Since such a set, for noncompact state spaces, might be empty (see, for instance, \cite{bib:GC}), we need to impose a certain structural assumption ((F4) below) on the mean field Lagrangian.

 
 Our second main result (Theorem  \ref{MR2} below) describes the behavior of the solution $(u^{T}, m^{T})$ of system \eqref{lab1}, as $T \to +\infty$, on compact subsets of $\mathbb{R}^{n}$. More precisely, let $(\bar\lambda,\bar u,\bar m)$ be a solution of \eqref{lab2}, where $\bar m$ is a projected Mather measure and $\bar\lambda$ denotes the Ma\~n\'e critical value of $H(x,p)-F(x,\bar m)$. See Definition \ref{ers} and Definition \ref{def4} below for definitions of  projected Mather measures and  Ma\~n\'e's  critical value for Tonelli Lagrangian systems, respectively.
Our first main result Theorem \ref{MR1} below guarantees the existence of such solutions $(\bar\lambda,\bar u,\bar m)$ and the uniqueness of Ma\~n\'e's critical value $\bar \lambda$. 
 We show that for every $R>R_1$ (see the definition of $R_1>0$ in Proposition \ref{mea}) there exists a  constant $C(R)>0$, such that for any $T\geq 1$ the unique solution $(u^{T}, m^{T})$ of  \eqref{lab1} satisfies: 
\begin{align*}
&\sup_{t \in [0,T]} \Big\| \frac{u^{T}(t, \cdot)-\bar u (\cdot)}{T} + \bar\lambda\left(1-\frac{t}{T}\right) \Big\|_{\infty, \overline{B}_{R}} \leq \frac{C(R)}{T^{\frac{1}{n+2}}}, 
\\& \frac{1}{T}\int_{0}^{T}{\big\| F(\cdot, m^{T}(s))- F(\cdot, \bar m) \big\|_{\infty, \overline{B}_{R}} ds} \leq \frac{C(R)}{T^{\frac{1}{n+2}}}.
\end{align*}

This paper is organized as follows: In Section 2, we fix the notation and recall preliminaries on measure theory and weak KAM theory.  
In Section 3, we prove the existence of solutions to the ergodic system \eqref{lab2} and we give a uniqueness criterion under a monotonicity assumption on $F$.
 In Section 4, after proving some preliminary lemmas,  we obtain the main convergence result. The Appendix contains the proof of ($ii$) of Theorem \ref{MR1} and a technical result which is used in the proof of Theorem  \ref{MR2}.

 \medskip

\section{Preliminaries}
In this section, we recall definitions and preliminary results from measure theory and weak KAM theory, which will be used later in this paper.
\subsection{Notation}
We write below a list of symbols used throughout this paper.
\begin{itemize}
	\item Denote by $\mathbb{N}$ the set of positive integers, by $\mathbb{R}^n$ the $n$-dimensional real Euclidean space,  by $\langle\cdot,\cdot\rangle$ the Euclidean scalar product, by $|\cdot|$ the usual norm in $\mathbb{R}^n$, and by $B_{R}$ the open ball with center $0$ and radius $R$.
	\item $\pi_{1}$ denotes the projection of $\mathbb{R}^{n} \times \mathbb{R}^{n}$  onto the first factor.
	\item  Let $a$, $b\in\mathbb{R}$. $a\vee b$ and $a\wedge b$ are used to stand for maximum and minimum, respectively: $a\vee b=\max\{a,b\}$ and $a\wedge b=\min\{a,b\}$. The positive part of a real function $f$ is defined by $f^+=f \vee 0$.
	\item Let $\Lambda$ be a real $n\times n$ matrix. Define the norm of $\Lambda$ by 
\[
\|\Lambda\|=\sup_{|x|=1, x\in\mathbb{R}^n}\|\Lambda x\|.
\]

\item Let $A$ be a Lebesgue-measurable subset of $\mathbb{R}^n$. Denote by $\mathcal{L}^{n}(A)$ the $n$-dimensional Lebesgue measure of $A$. Denote by $\mathbf{1}_{A}:\mathbb{R}^n\rightarrow \{0,1\}$ the characteristic function of $A$, i.e.,
\begin{align*}
\mathbf{1}_{A}(x)=
\begin{cases}
1  \ \ \ &x\in A,\\
0 &x \not\in A.
\end{cases}
\end{align*} 

\item Let $f$ be a real-valued function on $\mathbb{R}^n$. The set
\[
D^+ f(x)=\left\{p\in\mathbb{R}^n:\limsup_{y\to x}\frac{f(y)-f(x)-\langle p,y-x\rangle}{|y-x|}\leqslant 0\right\},
\]
is called the superdifferential of $f$ at $x$.
Let $u(t,x)$ be a real-valued function on $[0,T]\times \mathbb{R}^n$ for some $T>0$. The symbol $\nabla^+u(t,x)$ denotes the superdifferential of the function $x\mapsto u(t,x)$. 

\item ${\rm{Lip}}(A)$ stands for the space of Lipschitz functions on $A\subset\mathbb{R}^{n}$ and  denote by 
 \[
 {\rm{Lip}}(f)=\displaystyle{\sup_{\substack{x\neq y \\[1mm] x,y \in A}}}
 \frac{|f(x)-f(y)|}{|x-y|},
 \] 
the Lipschitz seminorm of $f\in{\rm{Lip}}(A)$. Define
\[
\|Df\|_{\infty,A}:=\esssup_{x\in A} |Df(x)|.
\] 

\item Let $A$ be a Lebesgue-measurable subset of $\mathbb{R}^{n}$. Let $1\leq p\leq \infty$. 
Denote by $L^p(A)$ the space of Lebesgue-measurable functions $f$ with $\|f\|_{p,A}<\infty$, where   
\begin{align*}
& \|f\|_{\infty, A}:=\esssup_{x \in A} |f(x)|,
\\& \|f\|_{p,A}:=\left(\int_{A}|f|^{p}dx\right)^{\frac{1}{p}}, \quad 1\leq p<\infty.
\end{align*}
Denote $\|f\|_{\infty,\mathbb{R}^n}$ by $\|f\|_{\infty}$ and $\|f\|_{p,\mathbb{R}^n}$ by $\|f\|_{p}$, for brevity.

\item Let $1\leq p\leq \infty$. The function $f$ belongs to the Sobolev space $W^{1,p}(\mathbb{R}^n)$ if $f\in L^p(\mathbb{R}^n)$ and for $i=1,\cdots,n$ the weak derivatives $\frac{\partial f}{\partial x_{i}}$ exist and belong to $L^p(\mathbb{R}^n)$. The function $f$ belongs to $W^{1,p}_{loc}(\mathbb{R}^n)$ if $f\in W^{1,p}(A)$ for each open set $A$ such that $\bar{A}$ is compact and $\bar{A}\subset \mathbb{R}^n$.

\item $C_b(\mathbb{R}^n)$ stands for the function space of bounded uniformly  continuous functions on $\mathbb{R}^n$. $C^{2}_{b}(\mathbb{R}^{n})$ stands for the space of bounded functions on $\mathbb{R}^n$ with bounded uniformly continuous first and second derivatives. 
$C^k(\mathbb{R}^{n})$ ($k\in\mathbb{N}$) stands for the function space of $k$-times continuously differentiable functions on $\mathbb{R}^n$, and $C^\infty(\mathbb{R}^{n}):=\cap_{k=0}^\infty C^k(\mathbb{R}^{n})$. 
 $C_c^\infty(\mathbb{R}^{n})$ stands for the space of functions in $C^\infty(\mathbb{R}^{n})$ with compact support. Let $a<b\in\mathbb{R}$.
  $AC([a,b];\mathbb{R}^n)$ denotes the space of absolutely continuous curves $[a,b]\to \mathbb{R}^n$.
  
  \item For $f \in C^{1}(\mathbb{R}^{n})$, the gradient vector of $f$ is denoted by $Df=(D_{x_{1}}f, ..., D_{x_{n}}f)$, where $D_{x_{i}}f=\frac{\partial f}{\partial x_{i}}$, $i=1,2,\cdots,n$.
Let $k$ be a nonnegative integer and let $\alpha=(\alpha_1,\cdots,\alpha_n)$ be a multiindex of order $k$, i.e., $k=|\alpha|=\alpha_1+\cdots +\alpha_n$ , where each component $\alpha_i$ is a nonnegative integer.   For $f \in C^{k}(\mathbb{R}^{n})$,
define $D^{\alpha}f:= D_{x_{1}}^{\alpha_{1}} \cdot\cdot\cdot D^{\alpha_{n}}_{x_{n}}f$. 
\end{itemize}

\medskip

\subsection{Measure theory}
Denote by $\mathscr{B}(\mathbb{R}^n)$ the  Borel $\sigma$-algebra on $\mathbb{R}^n$ and by $\mathcal{P}(\mathbb{R}^n)$ the space of Borel probability measures on $\mathbb{R}^n$.
The support of a measure $\mu \in \mathcal{P}(\mathbb{R}^n)$, denoted by $\supp(\mu)$, is the closed set defined by
\begin{equation*}
\supp (\mu) := \Big \{x \in \mathbb{R}^n: \mu(V_x)>0\ \text{for each open neighborhood $V_x$ of $x$}\Big\}.
\end{equation*}
We say that a sequence $\{\mu_k\}_{k\in\mathbb{N}}\subset \mathcal{P}(\mathbb{R}^n)$ is weakly-$*$ convergent to $\mu \in \mathcal{P}(\mathbb{R}^n)$, denoted by
$\mu_k \stackrel{w^*}{\longrightarrow}\mu$, 
  if
\begin{equation*}
\lim_{n\rightarrow \infty} \int_{\mathbb{R}^n} f(x)\,d\mu_n(x)=\int_{\mathbb{R}^n} f(x) \,d\mu(x), \quad  \forall f \in C_b(\mathbb{R}^n).
\end{equation*}

For $p\in[1,+\infty)$, the Wasserstein space of order $p$ is defined as
\begin{equation*}
\mathcal{P}_p(\mathbb{R}^n):=\{m\in\mathcal{P}(\mathbb{R}^n): \int_{\mathbb{R}^n} |x_0-x|^p\,dm(x) <+\infty\},
\end{equation*}
where $x_0 \in \mathbb{R}^n$ is arbitrary. 
Given any two measures $m$ and $m'$ in $\mathcal{P}_p(\mathbb{R}^n)$,  define
\[
\Pi(m,m'):=\left\{\lambda\in\mathcal{P}(\mathbb{R}^n\times \mathbb{R}^n): \lambda(A\times \mathbb{R}^n)=m(A),\ \lambda(\mathbb{R}^n\times A)=m'(A),\ \forall A\in \mathscr{B}(\mathbb{R}^n)\right\}.
\]
The Wasserstein distance of order $p$ between $m$ and $m'$ is defined by
    \begin{equation*}\label{dis1}
          d_p(m,m')=\inf_{\lambda \in\Pi(m,m')}\Big(\int_{\mathbb{R}^n\times \mathbb{R}^n}|x-y|^p\,d\lambda(x,y) \Big )^{1/p}.
    \end{equation*}
    The distance $d_1$ is also commonly called the Kantorovich-Rubinstein distance and can be characterized by a useful duality formula (see, for instance, \cite{bib:CV})  as follows
\begin{equation*}
d_1(m,m')=\sup\Big\{\int_{\mathbb{R}^n} f(x)\,dm(x)-\int_{\mathbb{R}^n} f(x)\,dm'(x) \ |\ f:\mathbb{R}^n\rightarrow\mathbb{R} \ \ \text{is}\ 1\text{-Lipschitz}\Big\}, 
\end{equation*}
for all $m$, $m'\in\mathcal{P}_1(\mathbb{R}^n)$.

We now recall that weak-$\ast$ convergence is equivalent to convergence in the metric space $(\mathcal{P}_{p}(\mathbb{R}^n), d_{p})$ (see, for instance,  \cite{bib:CV}). 
\begin{proposition}\label{cm}
Let $\{\mu_k\}_{k\in \mathbb{N}}$ be a sequence of measures in $\mathcal{P}_p(\mathbb{R}^n)$ and let $\mu$ be another element of $\mathcal{P}_p(\mathbb{R}^n)$.
Then
\begin{itemize}
	\item [(i)] if $d_p(\mu_k,\mu)\to 0$, then $\mu_k \stackrel{w^*}{\longrightarrow}\mu$,  as $k\to+\infty$;
	\item [(ii)] if $\supp(\mu_k)$ is contained in  a fixed compact subset of $\mathbb{R}^n$ for all $k\in \mathbb{N}$ and $\mu_k \stackrel{w^*}{\longrightarrow}\mu$,  as $k\to+\infty$, then $d_p(\mu_k,\mu)\to 0$, as $k\to+\infty$.
\end{itemize}
 \end{proposition}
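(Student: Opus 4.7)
The plan is to combine the Kantorovich-Rubinstein duality formula stated just before the proposition with the elementary inequality $d_1(m,m')\leq d_p(m,m')$ on $\mathcal{P}_p(\mathbb{R}^n)$, which follows from H\"older's inequality applied to $|x-y|$ inside the integral defining the cost (any coupling is a probability measure on $\mathbb{R}^n\times\mathbb{R}^n$).

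For part (i), I fix $f\in C_b(\mathbb{R}^n)$ and approximate it uniformly on $\mathbb{R}^n$ by the inf-convolutions
\[
f_N(x):=\inf_{y\in\mathbb{R}^n}\{f(y)+N|x-y|\},\qquad N\in\mathbb{N}.
\]
Standard arguments give that $f_N$ is $N$-Lipschitz, bounded by $\|f\|_\infty$, and $\|f_N-f\|_\infty\to 0$ by uniform continuity of $f$. Applying the duality formula to the $1$-Lipschitz function $f_N/N$ yields $|\int f_N\,d\mu_k-\int f_N\,d\mu|\leq N\,d_1(\mu_k,\mu)\leq N\,d_p(\mu_k,\mu)$, so a standard $3\varepsilon$-argument gives $\int f\,d\mu_k\to\int f\,d\mu$, which is exactly weak-$*$ convergence.

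For part (ii), let $K\subset\mathbb{R}^n$ be a compact set containing $\supp(\mu_k)$ for all $k\in\mathbb{N}$ (and hence $\supp(\mu)$, by a standard argument using a bump test function). I first show $d_1(\mu_k,\mu)\to 0$. Since adding a constant to a test function does not affect the duality expression, the supremum in the Kantorovich-Rubinstein formula can be restricted to $1$-Lipschitz functions $g$ with $g(x_0)=0$ for a fixed $x_0\in K$. Restricted to $K$, these functions are uniformly bounded by $\mathrm{diam}(K)$ and equicontinuous, so by Arzel\`a-Ascoli they form a totally bounded subset of $C(K)$. Given $\varepsilon>0$, I select a finite $\varepsilon$-net $\{g_1,\ldots,g_N\}$, extend each $g_i$ to a bounded $1$-Lipschitz function on $\mathbb{R}^n$ by McShane extension followed by truncation at $\pm\mathrm{diam}(K)$ (which preserves the Lipschitz constant and the values on $K$), and apply the weak-$*$ hypothesis to each of the finitely many extensions to obtain $\limsup_k d_1(\mu_k,\mu)\leq 2\varepsilon$. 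Since any coupling $\lambda\in\Pi(\mu_k,\mu)$ is concentrated on $K\times K$ by the marginal conditions, the elementary bound $|x-y|^p\leq\mathrm{diam}(K)^{p-1}|x-y|$ on $K\times K$ applied to a $d_1$-optimal coupling gives
\[
d_p(\mu_k,\mu)^p\leq\mathrm{diam}(K)^{p-1}\,d_1(\mu_k,\mu)\to 0.
\]

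The main technical subtlety is the extension-and-truncation step in part (ii), which is what transfers the Arzel\`a-Ascoli compactness on $K$ into weak-$*$ testing against functions in $C_b(\mathbb{R}^n)$; once this reduction to finitely many test functions is in place, everything else is routine given the duality formula and the Lipschitz approximation.
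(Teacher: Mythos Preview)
The paper does not prove this proposition at all: it is stated as a known fact with a reference to Villani's \emph{Topics in Optimal Transportation} (the sentence preceding the proposition reads ``see, for instance, [CV]''). So there is no in-paper proof to compare against.

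Your argument is correct and self-contained. A few comments on how it relates to the cited source. The standard statement in Villani is that convergence in $d_p$ is equivalent to weak-$*$ convergence together with convergence of $p$-th moments $\int |x|^p\,d\mu_k\to\int |x|^p\,d\mu$. Part (i) is then immediate, and for part (ii) the uniform compact support hypothesis makes moment convergence trivial (the map $x\mapsto |x|^p$, cut off outside a neighbourhood of $K$, lies in $C_b$). Your route is slightly different and in a sense more elementary: you bypass the moment characterisation entirely by first proving $d_1$-convergence via an Arzel\`a--Ascoli compactness argument on the dual test functions, and then bootstrapping to $d_p$ via the pointwise bound $|x-y|^p\le\mathrm{diam}(K)^{p-1}|x-y|$ on $K\times K$. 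This is a clean and direct argument that avoids quoting the full equivalence theorem. Note also that your use of uniform continuity in the inf-convolution step of part (i) is justified precisely because the paper defines $C_b(\mathbb{R}^n)$ as the space of bounded \emph{uniformly} continuous functions; with the more common definition (merely bounded continuous) one would instead argue via tightness of $\{\mu_k\}$.
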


Let $(X_1,S_1,\mu)$ be a measure space, $(X_2,S_2)$ a measurable space, and $f:X_1\to X_2$ a measurable map. The push-forward of $\mu$ through $f$ is the measure $f \sharp \mu$ on $(X_{2}, S_{2})$ defined by 
\begin{equation*}
f \sharp \mu(B):=\mu\left(f^{-1}(B)\right), \quad \forall B \in S_2.
\end{equation*} 
\noindent
The push-forward has the property that a measurable map $g: X_{2} \to \mathbb{R}$ is integrable with respect to $f \sharp \mu$ if and only if $g \circ f$ is integrable on $X_{1}$ with respect to $\mu$. In this case, we have that 
\begin{equation*}
\int_{X_1}g(f(x))\,d\mu(x)=\int_{X_2}g(y)\,df\sharp\mu(y).
\end{equation*}

\medskip
\subsection{Weak KAM theory on $\mathbb{R}^n$}
\begin{definition}[Tonelli Lagrangians]\label{def1}
	A $C^{2}$ function $L: \mathbb{R}^{n} \times \mathbb{R}^{n} \to \mathbb{R}$ is called a {\it Tonelli Lagrangian} if it satisfies the following: 
	\begin{itemize}
		\item[(i)] for each $(x,v) \in \mathbb{R}^{n} \times \mathbb{R}^{n} $, the Hessian $D^{2}_{vv}L(x,v)$ is positive definite;
		\item[(ii)] for each $A>0$ there exists $B(A) \in \mathbb{R}$ such that $$L(x,v) >A|v|+B(A), \quad  \forall (x,v) \in \mathbb{R}^{n} \times \mathbb{R}^{n};$$
		\item[(iii)] for each $R >0$,  $A(R):=\sup\Big\{L(x,v): |v| \leq R\Big\} < +\infty.$
	\end{itemize}
\end{definition}
\begin{definition}[Strict Tonelli Lagrangians]\label{def2}
A $C^{2}$ function $L: \mathbb{R}^{n} \times \mathbb{R}^{n} \to \mathbb{R}$ is called a {\it strict Tonelli Lagrangian} if there exist positive constants $C_{i}$ ($i=1,2,3$) such that, for all $(x,v) \in \mathbb{R}^{n} \times \mathbb{R}^{n}$:
\begin{itemize}
\item[(a)] $\frac{I}{C_{1}} \leq D_{vv}^{2}L(x,v) \leq C_{1} I$, where $I$ is the identity matrix;
	\item[(b)] $\|D^{2}_{vx}L(x,v)\| \leq C_{2}(1+|v|)$;
	\item[(c)] $|L(x,0)|+|D_{x}L(x,0)|+ |D_{v}L(x,0)| \leq C_{3}$.  
	 \end{itemize}
\end{definition}

\begin{remark}\label{re2.1}\em
Let $L$ be a strict Tonelli Lagrangian.  It is easy to check that there are two positive constants $\alpha$, $\beta$ depending only on $C_{i}$ ($i=1,2,3$) in Definition \ref{def2},  such that 
	\begin{itemize}
\item[($e$)]$|D_{v}L(x,v)| \leq \alpha(1+|v|)$, \quad $\forall (x,v)\in \mathbb{R}^n\times\mathbb{R}^n$;
\item[($f$)] $|D_{x}L(x,v)| \leq \alpha(1+|v|^{2})$, \quad $\forall (x,v)\in \mathbb{R}^n\times\mathbb{R}^n$;
\item[($g$)]$\frac{1}{4\beta}|v|^{2}- \alpha \leq L(x,v) \leq 4\beta |v|^{2} +\alpha$, \quad $\forall (x,v)\in \mathbb{R}^n\times\mathbb{R}^n$;
\item[($h$)]$\sup\big\{L(x,v): |v| \leq R \big\} < +\infty$, \quad $\forall R\geq 0$.
\end{itemize}
In view of (a), (g), and (h), it is clear that a strict Tonelli Lagrangian is a Tonelli Lagrangian. 
\end{remark}

\medskip

From now on to the end of this section, we always assume that $L$ is a Tonelli Lagrangian on $\mathbb{R}^n\times \mathbb{R}^n$.
\medskip

Define the Hamiltonian $H: \mathbb{R}^{n} \times \mathbb{R}^{n} \to \mathbb{R}$ associated with $L$ by 
$$H(x,p)=\sup_{v \in \mathbb{R}^{n}} \Big\{ \big\langle p,v \big\rangle -L(x,v) \Big\}, \quad  \forall (x,p) \in \mathbb{R}^{n} \times \mathbb{R}^{n}.$$
It is straightforward to check that if $L$ is a Tonelli Lagrangian (resp. a strict Tonelli Lagrangian), then $H$ defined above also satisfies ($i$), ($ii$), and ($iii$) in Definition 2.1 (resp. ($a$), ($b$), and ($c$) in Definition 2.2). Such a function $H$ is called a Tonelli Hamiltonian (resp. a strict Tonelli Hamiltonian). Moreover, if $L$ is a {\em reversible} Lagrangian, i.e., $L(x,v)=L(x,-v)$ for all $(x,v) \in \mathbb{R}^{n} \times \mathbb{R}^{n}$, then  $H(x,p)=H(x,-p)$ for all $(x,p) \in \mathbb{R}^{n} \times \mathbb{R}^{n}$.

Let us recall definitions of weak KAM solutions and viscosity solutions of the Hamilton-Jacobi equation 
\begin{align}\label{hj}
H(x,Du)=c, \quad x\in \mathbb{R}^n,	
\end{align}
where $c$ is a real constant.

\begin{definition}[Weak KAM solutions]\label{def3}
A function $u \in C(\mathbb{R}^{n})$ is called a backward (resp. forward) weak KAM solution of equation \eqref{hj} if: 
\begin{itemize}
\item[($i$)] for each continuous  piecewise $C^{1}$ curve $\gamma:[t_{1}, t_{2}] \to \mathbb{R}^{n}$, we have that $$u(\gamma(t_{2}))-u(\gamma(t_{1})) \leq \int_{t_{1}}^{t_{2}}{L(\gamma(s), \dot\gamma(s))ds}+c(t_{2}-t_{1});$$
\item[($ii$)] for each $x \in \mathbb{R}^{n}$, there exists a $C^{1}$ curve $\gamma:(-\infty,0] \to \mathbb{R}^{n}$ (resp. $\gamma:[0,+\infty) \to \mathbb{R}^{n}$) with $\gamma(0)=x$ such that 
\[
u(x)-u(\gamma(t))=\int_{t}^{0}{L(\gamma(s), \dot\gamma(s))ds}-ct, \quad  \forall t<0
\]
 (resp. $u(\gamma(t))-u(x) =\int_{0}^{t}{L(\gamma(s), \dot\gamma(s))ds}+ct, \quad \forall t>0$).
\end{itemize}
\end{definition}
\begin{remark}\em
A function $u$ on $\mathbb{R}^{n}$ is said to be dominated by $L+c$, denoted by $u \prec L+c$, if $u$ satisfies condition (i) of Definition \ref{def3}. A curve $\gamma$  is said to be $(u,L,c)$-calibrated if it satisfies condition (ii) of Definition \ref{def3}.
\end{remark}

\begin{definition}[Viscosity solutions]\label{visco}
	Let $V\subset \mathbb{R}^n$ be an open set.
	\begin{itemize}
		\item [($i$)] A function $u:V\rightarrow \mathbb{R}$ is called a viscosity subsolution of equation \eqref{hj}, if for every $C^1$ function $\varphi:V\rightarrow\mathbb{R}$ and every point $x_0\in V$ such that $u-\varphi$ has a local maximum at $x_0$, we have that 
		\[
		H(x_0,D\varphi(x_0))\leq c;
		\]
		\item [($ii$)] A function $u:V\rightarrow \mathbb{R}$ is called a viscosity supersolution of equation \eqref{hj}, if for every $C^1$ function $\psi:V\rightarrow\mathbb{R}$ and every point $y_0\in V$ such that $u-\psi$ has a local minimum at $y_0$, we have that
		\[
		H(y_0,D\psi(y_0))\geq c;
		\]
		\item [($iii$)] A function $u:V\rightarrow\mathbb{R}$ is called a viscosity solution of equation \eqref{hj} if it is both a viscosity subsolution and a viscosity supersolution.
	\end{itemize}
\end{definition}

\begin{definition}[Ma\~n\'e critical value]\label{def4}
The Ma\~n\'e critical value of  a Tonelli Hamiltonian $H$ is defined by\begin{equation*} 
c(H):=\inf \left\{c \in \mathbb{R} :\ \text{there exists a  viscosity solution} \ u \in C(\mathbb{R}^{n}) \ \text{of} \ H(x,Du) =c\right\}.\end{equation*}
\end{definition}
See \cite[Theorem 1.1]{bib:FM} for the following weak KAM theorem for noncompact state spaces.
\begin{theorem}[Weak KAM theorem]\label{wkt}
Let $H$ be a Tonelli Hamiltonian. Then, there exists a global viscosity solution of equation
\[
H(x, Du)=c(H),\quad x\in \mathbb{R}^n.
\]
\end{theorem}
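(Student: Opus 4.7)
The strategy is to establish existence at the critical level $c(H)$ by compactness, starting from solutions above the critical value. Fix a reference point $x_0 \in \mathbb{R}^n$ and choose a sequence of real numbers $c_k \downarrow c(H)$ with $c_k > c(H)$. By the very definition of $c(H)$ as an infimum, for each $k$ there exists a viscosity solution $u_k \in C(\mathbb{R}^n)$ of $H(x, Du_k) = c_k$. Normalize by setting $\tilde u_k := u_k - u_k(x_0)$, so $\tilde u_k(x_0) = 0$, and note that the normalized functions remain viscosity solutions of the same equation.

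The key technical input is a uniform Lipschitz estimate. Since $H$ is a Tonelli Hamiltonian, superlinearity gives coercivity in $p$: for every $M > 0$ there exists $K(M)$ such that $H(x, p) > M$ whenever $|p| > K(M)$. Applied to any viscosity subsolution $v$ of $H(x, Dv) = c_k$, and using that $c_k \leq c_1$, this forces $|Dv| \leq \kappa$ almost everywhere for some $\kappa$ independent of $k$; hence each $\tilde u_k$ is $\kappa$-Lipschitz. Combined with the normalization $\tilde u_k(x_0) = 0$, this yields the pointwise bound $|\tilde u_k(x)| \leq \kappa |x - x_0|$, so the family $\{\tilde u_k\}$ is equicontinuous and locally uniformly bounded.

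By Arzel\`a-Ascoli applied on an exhaustion $\overline{B}_R$ of $\mathbb{R}^n$ and a diagonal extraction, some subsequence $\tilde u_{k_j}$ converges locally uniformly to a function $\bar u \in C(\mathbb{R}^n)$, which is itself $\kappa$-Lipschitz. Because the classical stability theorem for viscosity solutions is local and allows the right-hand side constants $c_{k_j} \to c(H)$ to vary, the limit $\bar u$ satisfies $H(x, D\bar u) = c(H)$ in the viscosity sense on $\mathbb{R}^n$, which is the desired conclusion.

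The main obstacle is to ensure that the scheme is not vacuous, namely that the set of admissible $c$ in Definition \ref{def4} is nonempty and bounded below, so that $c(H) \in \mathbb{R}$ and the sequence $c_k$ can be chosen. Boundedness below is obtained by testing the supersolution condition against a smooth function with controlled gradient and using superlinearity of $L$ (equivalently, the coercivity of $H$) to rule out arbitrarily negative values of $c$. Nonemptiness is the truly noncompact difficulty: on a compact manifold one would simply invoke the Lax-Oleinik semigroup, but on $\mathbb{R}^n$ compactness of orbits fails. Following \cite{bib:FM}, one handles this by showing that for $c$ sufficiently large the Ma\~n\'e potential $\Phi_c(y, x) := \inf_{T>0}\inf_{\gamma(0)=y,\gamma(T)=x}\int_0^T\!\big(L(\gamma(s),\dot\gamma(s))+c\big)\,ds$ is finite and locally Lipschitz, and that $x \mapsto \Phi_c(y_0, x)$ is a viscosity solution of $H(x, Du) = c$; this furnishes the initial solutions needed for the compactness argument above.
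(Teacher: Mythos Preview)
The paper does not prove this theorem; it is quoted from \cite[Theorem~1.1]{bib:FM} without argument, so there is no in-paper proof to compare against directly. Your compactness-and-stability scheme is a legitimate route, and the main steps (uniform Lipschitz bound from the uniform coercivity of $H$ in $p$, Arzel\`a--Ascoli on an exhaustion, stability of viscosity solutions under locally uniform limits with varying right-hand side) are correct.

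Two small corrections. First, the definition of $c(H)$ as an infimum does not assert that a solution exists at \emph{every} $c_k>c(H)$; it only guarantees a sequence $c_k$ belonging to the admissible set with $c_k\to c(H)$. You should either select the $c_k$ that way, or invoke the Ma\~n\'e-potential construction from your last paragraph up front to see that the admissible set is in fact an upper half-line. Second, the Lipschitz bound deserves one more sentence: coercivity of $H$ bounds every element of $D^{+}u_k$, but to pass from this to a global Lipschitz estimate for a merely continuous subsolution one uses the equivalence between viscosity subsolutions and functions dominated by $L+c_k$ (condition ($i$) in Definition~\ref{def3}), which yields a Lipschitz constant depending only on an upper bound for $c_k$.

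As you yourself acknowledge, the substantive step---that the admissible set is nonempty and $c(H)\in\mathbb{R}$---is delegated to \cite{bib:FM}. That is consistent with what the paper does. Note, however, that Fathi--Maderna do not argue by approximation from supercritical levels: they construct weak KAM solutions \emph{directly} at the critical value, via the Lax--Oleinik semigroup and the Ma\~n\'e/Peierls potential at $c(H)$, and then identify backward weak KAM solutions with viscosity solutions. Your argument is thus a different, and once the potential is available somewhat more elementary, repackaging of the same ingredients.
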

In \cite{bib:FM},  viscosity solutions are shown to coincide with backward weak KAM solutions.

Observe that, as $\mathbb{R}^{n}$ can be seen as a covering of the torus $\mathbb{T}^{n}$, Ma\~n\'e's critical value can be characterized as follows~(\cite{bib:CIPP}): \begin{equation}\label{lab55} c(H)=\inf_{u \in C^{\infty}(\mathbb{R}^{n})} \sup_{ x \in \mathbb{R}^{n}} H(x, Du(x)) .\end{equation}

We conclude this section by recalling the notion of Mather set and the role such a set plays for the regularity of viscosity solutions. Let $L$ be a Tonelli Lagrangian. As is well known, the associated Euler-Lagrange equation, i.e., 
\begin{equation}\label{EL} \frac{d}{dt}D_{v}L(x, \dot x)=D_{x}L(x, \dot x),  \end{equation} 
generates a flow of diffeomorphisms $\phi_{t}^{L}: \mathbb{R}^{n} \times \mathbb{R}^{n} \to \mathbb{R}^{n} \times \mathbb{R}^{n}$, with $t \in \mathbb{R}$, defined by 
\begin{equation*}\label{lab11} \phi_{t}^{L}(x_{0},v_{0})=( x(t), \dot x(t)), \end{equation*} 
where $x: \mathbb{R} \to \mathbb{R}^{n}$ is the maximal solution of \eqref{EL} with initial conditions $x(0)=x_{0}, \ \dot x(0)=v_{0}$. It should be noted that, for any Tonelli Lagrangian, the flow $\phi_{t}^{L}$ is complete~(\cite{bib:FM}). 

 We recall that a Borel probability measure $\mu$ on $\mathbb{R}^{n} \times \mathbb{R}^{n}$ is called $\phi_{t}^{L}$-invariant, if $$\mu(B)=\mu(\phi_{t}^{L}(B)), \quad  \forall t \in \mathbb{R}, \quad  \forall B \in \mathscr{B}(\mathbb{R}^{n} \times \mathbb{R}^{n}),$$ or, equivalently,  $$\int_{\mathbb{R}^{n} \times \mathbb{R}^{n}} {f(\phi_{t}^{L}(x,v))\ d\mu(x,v)}=\int_{\mathbb{R}^{n} \times \mathbb{R}^{n}}{f(x,v)\ d\mu(x,v)}, \quad \forall f \in C^{\infty}_{c}(\mathbb{R}^{n} \times \mathbb{R}^{n}).$$ We denote by $\mathcal{M}_{L}$ the class of all $\phi_{t}^{L}$-invariant probability measures. 
\begin{definition}[Mather measures \cite{bib:Mat}]\label{mat}
	A probability measure $\mu \in \mathcal{M}_{L}$ is called a Mather measure for $L$, if it satisfies 
	\[  \int_{\mathbb{R}^{n} \times \mathbb{R}^{n}}{L(x,v)\ d\mu(x,v)}=\inf_{\nu \in \mathcal{M}_{L}} \int_{\mathbb{R}^{n} \times \mathbb{R}^{n}}{L(x,v)\ d\nu(x,v)}. \]
	\end{definition} 

Under the assumption {\bf (F4)} below, we deduce that the set of Mather measures is nonempty. Moreover,  in \cite{bib:FA}, it was proved that 
\begin{equation*}\label{lab44} c(H)=- \inf_{\nu \in \mathcal{M}_{L}} \int_{\mathbb{R}^{n} \times \mathbb{R}^{n}}{L(x,v)\ d\nu(x,v)}.	
\end{equation*}
Denote by $\mathcal{M}_{L}^{\ast}$ the set of Mather measures. Observe that, if $L$ (resp. $H$) is a reversible Lagrangian (resp. reversible Hamiltonian), then
\begin{equation}\label{rev}
-c(H)=\inf_{x \in \mathbb{R}^{n}} L(x,0).
\end{equation}
 The Mather set is the subset $\mathcal{M}_{0}\subset \mathbb{R}^n\times \mathbb{R}^n$   defined by 
\[
\mathcal{M}_{0} = \overline{\bigcup_{\mu \in \mathcal{M}^{\ast}_{L}} \supp(\mu)}.
\]
We call  $M_{0}=\pi_{1}(\mathcal{M}_0) \subset \mathbb{R}^{n}$ the projected Mather set. 
See \cite[Theorem 4.12.3]{bib:FA} for the following result.
\begin{theorem}\label{regularity}
	If $u$ is dominated by $L+c(H)$, then it is differentiable at every point of the projected Mather set $M_{0}$. Moreover, if $(x,v) \in \mathcal{M}_{0}$,  then $$ Du(x) =D_{v}L(x,v) $$ and the map $M_{0} \to \mathbb{R}^{n} \times \mathbb{R}^{n}$, defined by $x \mapsto (x,Du(x))$, is locally Lipschitz with a Lipschitz constant which is independent of $u$.
\end{theorem}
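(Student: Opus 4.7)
The plan is to build the result from three pillars: full calibration along Mather orbits, pointwise differentiability from two-sided calibration, and Mather's graph theorem for the Lipschitz regularity.

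First, for any Mather measure $\mu \in \mathcal{M}_L^{\ast}$ and any $(x_0,v_0) \in \supp(\mu)$, I would show that the Euler--Lagrange orbit $\gamma(t) = \pi_1(\phi_t^L(x_0,v_0))$ is $(u,L,c(H))$-calibrated on the whole of $\mathbb{R}$. Since $u \prec L + c(H)$, the non-negative defect
\[
\Delta_T(x,v) := \int_0^T L(\phi_s^L(x,v))\, ds + c(H)\,T - \bigl(u(\pi_1(\phi_T^L(x,v))) - u(x)\bigr)
\]
is well-defined along every Lagrangian trajectory. Integrating against $\mu$ and exploiting its $\phi_t^L$-invariance together with the Mather identity $\int L\, d\mu = -c(H)$ recalled just before the statement, one obtains $\int \Delta_T\, d\mu = 0$; non-negativity then forces $\Delta_T \equiv 0$ on $\supp(\mu)$, and joint continuity of $\Delta_T$ in $(x,v)$ together with the definition $\mathcal{M}_0 = \overline{\bigcup_{\mu \in \mathcal{M}_L^{\ast}} \supp(\mu)}$ propagates calibration to all of $\mathcal{M}_0$.

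Second, I would deduce differentiability of $u$ at every $x_0 \in M_0$. Given $(x_0,v_0) \in \mathcal{M}_0$, Step~1 supplies a curve $\gamma$ calibrated on both $(-\infty,0]$ and $[0,+\infty)$. A classical comparison argument, based on the Fenchel identity $\langle p,v\rangle = L(x,v) + H(x,p)$ that holds precisely when $p = D_v L(x,v)$, translates the two halves of the calibration into the facts that $D_v L(x_0,v_0)$ lies simultaneously in the superdifferential $D^+ u(x_0)$ and in the subdifferential of $u$ at $x_0$: one compares $\gamma$ to nearby test curves issuing from $x_0$ and uses dominance to read off the appropriate first-order expansion. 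Since $u \prec L + c(H)$ forces $u$ to be locally Lipschitz (by the superlinear growth of $L$), the coexistence of a super- and a sub-differential vector pins $u$ down as differentiable at $x_0$, with $Du(x_0) = D_v L(x_0,v_0)$.

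Third, for the local Lipschitz property I would invoke Mather's graph theorem, according to which $\pi_1 \colon \mathcal{M}_0 \to M_0$ admits a locally Lipschitz inverse whose constant depends only on $L$. Composing this inverse with the $C^1$ Legendre transform $(x,v) \mapsto (x, D_v L(x,v))$ yields the desired local Lipschitz estimate for $x \mapsto (x, Du(x))$ on $M_0$, with a constant that is manifestly independent of the particular dominated $u$. The main obstacle is the first step: promoting $\Delta_T \equiv 0$ from a $\mu$-almost-everywhere statement to a pointwise one on the closure $\mathcal{M}_0$ demands careful use of the continuity of the Euler--Lagrange flow together with the dominance inequality, and in the non-compact setting of $\mathbb{R}^n$ one must additionally ensure enough integrability of $u$ against $\mu$ to justify the Fubini/invariance exchange---an issue that, later in the paper, will be controlled by the structural assumption (F4) on the mean field Lagrangian.
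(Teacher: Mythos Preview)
The paper does not prove this theorem at all: it is quoted verbatim from Fathi's book, with the sentence ``See \cite[Theorem 4.12.3]{bib:FA} for the following result'' immediately preceding the statement, and no argument is given. So there is no ``paper's own proof'' against which to compare your proposal.

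That said, your outline is essentially the standard proof one finds in Fathi's monograph, and the three pillars you identify---calibration of every orbit in $\mathcal{M}_0$, differentiability of a dominated function at any point admitting a two-sided calibrated curve, and Mather's graph theorem for the Lipschitz inverse of $\pi_1|_{\mathcal{M}_0}$---are exactly the ingredients used there. Your handling of the passage from $\mu$-a.e.\ to everywhere on $\supp(\mu)$ via continuity of the defect, and then to the closure $\mathcal{M}_0$, is the right mechanism. The caveat you raise at the end about integrability of $u$ against $\mu$ in the non-compact setting is genuine and is precisely why the paper, when it later \emph{applies} this theorem (in the proof of Theorem~\ref{MR1}), works under assumption \textbf{(F4)}, which forces all Mather measures to be supported in the fixed compact set $K_0\times\{0\}$ and thereby sidesteps the issue entirely. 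For the theorem as stated in full generality on $\mathbb{R}^n$ one would indeed have to appeal to the more delicate arguments of Fathi--Maderna, but the paper never needs that level of generality.
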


\medskip

\section{Ergodic MFG system: existence and uniqueness}
In this section we prove an existence and uniqueness result for \eqref{lab2}. 
\subsection{Assumptions}
From now on, we suppose that $L$ is a reversible strict Tonelli Lagrangian.
Let $F: \mathbb{R}^{n} \times \mathcal{P}_{1}(\mathbb{R}^{n}) \to \mathbb{R}$ be a function, satisfying the following  assumptions:

\begin{itemize}
	\item[\textbf{(F1)}] for every measure $m \in \mathcal{P}_{1}(\mathbb{R}^{n})$ the function $x \mapsto F(x,m)$ is of class $C^{2}_{b}(\mathbb{R}^{n})$ and	\begin{equation*}
	\sup_{m \in \mathcal{P}_{1}(\mathbb{R}^{n})} \sum_{|\alpha|\leq 2} \| D^{\alpha}F(\cdot, m)\|_{\infty} < +\infty,
	\end{equation*} 
	where $\alpha=(\alpha_1,\cdots,\alpha_n)$ and $D^{\alpha}=D^{\alpha_1}_{x_1}\cdots D^{\alpha_n}_{x_n}$;
	\item[\textbf{(F2)}] for every $x \in \mathbb{R}^{n}$ the function $m \mapsto F(x,m)$ is Lipschitz continuous and 
 $${\rm{Lip}}_{2}(F):=\displaystyle{\sup_{\substack{x\in\mathbb{R}^{n}\\ m_1,\ m_2 \in \mathcal{P}_{1}(\mathbb{R}^{n}) \\ m_1\neq m_2 } }}\frac{|F(x,m_1)-F(x,m_2)|}{d_{1}(m_1, m_2)} < +\infty;$$
	\item[\textbf{(F3)}]  there is a constant $C_F>0$ such that for every $m_{1}$, $m_{2} \in \mathcal{P}_{1}(\mathbb{R}^{n})$,
	\begin{equation*}
	\int_{\mathbb{R}^{n}}{(F(x,m_{1})-F(x, m_{2}))\ d(m_{1}-m_{2})} \geq C_F \int_{\mathbb{R}^{n}}{\left(F(x,m_{1})-F(x,m_{2})\right)^{2}\ dx},
	\end{equation*}
	and for each $x \in \mathbb{R}^{n}$,
	 \begin{equation*}
	 \int_{\mathbb{R}^{n}}{(F(x,m_{1})-F(x, m_{2}))\ d(m_{1}-m_{2})} = 0 \quad \text{if and only if}\quad    F(x,m_{1})=F(x,m_{2});
	 \end{equation*}
	 
	 \item[\textbf{(F4)}] there exist a compact set $K_{0}\subset\mathbb{R}^{n}$ and a constant $\delta_0>0$ such that, for every $m \in \mathcal{P}_{1}(\mathbb{R}^{n})$, 
	\begin{equation*}
	 \inf_{x \in \mathbb{R}^{n} \backslash K_{0}} \Big\{L(x,0)+F(x,m)\Big\}-\min_{x \in K_{0}} \Big\{L(x,0)+F(x,m) \Big\}\geq \delta_0.
	\end{equation*}
	\end{itemize}

Now we give an example where $F$ and $L$ satisfy conditions {\bf (F1)}-{\bf (F4)}.
\begin{example}\label{exa}\rm
 Let $L(x,v)=L(v)$ be a reversible strict Tonelli Lagrangian. Let 
 $$F(x,m)=f(x)g(m),$$ 
 where
\begin{itemize}
	\item  $f: \mathbb{R}^n \to \mathbb{R}$  satisfies
     \begin{itemize}
 	\item [($i$)] $f\in C^{2}_{b}({\mathbb{R}^n})$, and $\int_{\mathbb{R}^n}|f|^2\ dx<+\infty$;
 	\item [($ii$)] $\argmin_{x \in \mathbb{R}^n} f(x)$ is nonempty and bounded. 
 	  \end{itemize} 
\item $g(m)=G\big(\int_{\mathbb{R}^n}f(x)\ dm\big)$ for $m\in\mathcal{P}_1(\mathbb{R}^n)$, with $G\in C^1(\mathbb{R})$ satisfying the following:
  \begin{itemize}
 	\item [($iii$)] $G\geq \delta_{1}$, where $\delta_{1}$ is a positive constant;
 	\item [($iv$)] for each $R>0$, there is $\nu(R)>0$ such that for any $s\in[-R,R]$, 
 	\[
 	\nu(R)\leq G'(s)\leq \frac{1}{\nu(R)}.
 	\]  
   \end{itemize}
  \end{itemize}
Let $K_0\subset \mathbb{R}^n$ be a compact set such that \[
 	{\rm{int}}\ K_0\supset\argmin_{x \in \mathbb{R}^n} f(x),\]
 	where ${\rm{int}}\ K_{0}$ denotes the interior of $K_{0}$.
	
 	Then, we claim that  that assumptions {\bf (F1)}-{\bf (F4)} are fulfilled. Indeed, {\bf (F1)} and {\bf (F2)} follow, immediately,  from  ($i$), ($iv$), and the differentiability of $G$. In order to check that $F$ satisfies  {\bf (F3)},  fix $m_{1}$, $m_{2} \in \mathcal{P}_{1}(\mathbb{R}^{n})$ and observe that, since $m_{1}$, $m_{2}$ are probability measures,  
 \begin{equation*}
 \left|\int_{\mathbb{R}^{n}} f\ dm_{i}\right| \leq \| f\|_{\infty},
 \end{equation*}
for $i=1,2$. Moreover, by definition, we have that
		\begin{align*}
		&\int_{\mathbb{R}^n}(F(x,m_1)-F(x,m_2))^2\ dx&\\
		=&\left(G\left(\int_{\mathbb{R}^n}f\ dm_1\right)-G\left(\int_{\mathbb{R}^n}f\ dm_2\right)\right)^2\|f\|_{2}^{2},
		\end{align*}
		and
		\begin{align*}
		&\int_{\mathbb{R}^n}(F(x,m_1)-F(x,m_2))\ d(m_1-m_2)\\
		=&\left(G\left(\int_{\mathbb{R}^n}f\ dm_1\right)-G\left(\int_{\mathbb{R}^n}f\ dm_2\right)\right)\int_{\mathbb{R}^n}f\ d(m_1-m_2).
		\end{align*}
		If $g(m_{1})=g(m_{2})$, then  the inequality in  {\bf (F3)} is obvious. Suppose $g(m_{1})\not =g(m_{2})$. Then, $\int_{\mathbb{R}^{n}} f\ d(m_{1}-m_{2}) \not =0$ and we have that 
		\begin{align*}
			&\frac{\int_{\mathbb{R}^n}(F(x,m_1)-F(x,m_2))^2\ dx}{\int_{\mathbb{R}^n}(F(x,m_1)-F(x,m_2))\ d(m_1-m_2)}\\
			\leq &\ \|f\|_{2}^{2} \left| \frac{\left(G\left(\int_{\mathbb{R}^n}f\ dm_1\right)-G\left(\int_{\mathbb{R}^n}f\ dm_2\right)\right)}{\int_{\mathbb{R}^n}f\ d(m_1-m_2)} \right| \\
			\leq &\ \frac{\|f\|_{2}^{2}}{\nu( \|f\|_{\infty})}.
		\end{align*}	
		So far, we have checked that the inequality in {\bf (F3)} holds true.  For the necessary and sufficient condition in {\bf (F3)}, we only need to prove that $0= \int_{\mathbb{R}^{n}}{\left( F(x,m_{1})-F(x,m_{2}) \right)\ d(m_{1}-m_{2})}$ implies that $F(x,m_{1})=F(x,m_{2})$ for all $x\in \mathbb{R}^n$. Note that
		\begin{align*}
		0=& \int_{\mathbb{R}^{n}}{\left( F(x,m_{1})-F(x,m_{2}) \right)\ d(m_{1}-m_{2})} \\
		=& \left( G\left( \int_{\mathbb{R}^{n}} f\ dm_{1} \right) - G\left( \int_{\mathbb{R}^{n}} f\ dm_{2} \right) \right) \int_{\mathbb{R}^{n}} f\ d(m_{1}-m_{2}) \\
		\geq &\ \nu(\|f\|_{\infty}) \left(\int_{\mathbb{R}^{n}} f\ d(m_{1}-m_{2})\right)^2.
		\end{align*}
		Hence, $$ \int_{\mathbb{R}^{n}} f\ dm_{1}= \int_{\mathbb{R}^{n}} f\ dm_{2},$$ and thus for every $x \in \mathbb{R}^{n}$, we have that $F(x,m_{1})=F(x,m_{2}).$
		
Finally, we prove that $L$ and $F$ satisfy  {\bf (F4)}. Fix $m \in \mathcal{P}_{1}(\mathbb{R}^{n})$. 
		Since $K_{0}$ is a compact neighborhood of $\argmin_{x \in \mathbb{R}^n} f(x)$ it follows that $$\inf_{x \in \mathbb{R}^{n} \backslash K_{0}} f(x) > \min_{x \in K_{0}} f(x).$$ Then, there is a constant $\delta_{2} >0$ such that $$\inf_{x \in \mathbb{R}^{n} \backslash K_{0}} f(x)-\min_{x \in K_{0}} f(x) \geq \delta_2.$$
	 Therefore, since $g(m)
	\geq \delta_1$, we obtain that 
\begin{equation*}
\inf_{x \in \mathbb{R}^{n} \backslash K_{0}}\Big\{ L(0) +F(x,m)\Big\}-\min_{x \in K_{0}} \Big\{ L(0) +F(x,m)\Big\} \geq \delta_{1}\delta_{2}:=\delta_{0}.
\eqno{\square}
\end{equation*}

\end{example}
 
\medskip

Let $H$ be the reversible strict Tonelli Hamiltonian associated with $L$.
For any  $m\in \mathcal{P}_{1}(\mathbb{R}^{n})$, define the mean field Lagrangian and Hamiltonian associated with $m$
by 
\begin{align}
L_{m}(x,v)&:=L(x,v)+F(x,m),\,\quad (x,v)\in\mathbb{R}^n\times\mathbb{R}^n,\label{lm}\\
H_{m}(x,p)&:=H(x,p)-F(x,m),\quad (x,p)\in\mathbb{R}^n\times\mathbb{R}^n\label{hm}.
\end{align}
By assumptions {\bf (F1)} and {\bf (F2)}, it is clear that for any given $m\in \mathcal{P}_{1}(\mathbb{R}^{n})$,  $L_m$ (resp. $H_{m}$) is a strict Tonelli Lagrangian (resp. Hamiltonian).

\begin{definition}[Mean field ergodic solutions]\label{ers}
	We say that a triple $(\bar\lambda, \bar u, \bar m) \in \mathbb{R} \times C(\mathbb{R}^{n}) \times \mathcal{P}_{1}(\mathbb{R}^{n})$ is a solution of system \eqref{lab2} if 
	\begin{itemize}
		\item[($i$)] $\bar u$ is a Lipschitz continuous viscosity solution of the first equation of system \eqref{lab2};
		\item[($ii$)] $D\bar u$ exists for $\bar m-a.e. \ \ x \in \mathbb{R}^{n}$;
		\item [($iii$)] $\bar m$ is a projected Mather measure, i.e., there is a Mather measure $\eta_{\bar m}$ for $L_{\bar m}$ such that $\bar m=\pi_{1} \sharp \eta_{\bar m}$;
		\item[($iv$)] $\bar m$ satisfies the second equation of system \eqref{lab2} in the sense of distributions, that is,
		$$ \int_{\mathbb{R}^{n}}{\big\langle Df(x), D_{p}H\left(x, D\bar u(x)\right) \big\rangle\ d\bar m(x)}=0, \quad  \forall f \in C^{\infty}_{c}(\mathbb{R}^{n}). 
		$$ 
	\end{itemize}
	We denote by $\mathcal{S}$ the set of  solutions of system \eqref{lab2}.
\end{definition}
\noindent
Define the function $\lambda: \mathcal{P}_{1}(\mathbb{R}^{n}) \to \mathbb{R}$  by 
\[
 \lambda(m):=c(H_{m}).
 \]
\begin{lemma}\label{LEM1}

The function $m\mapsto \lambda(m)$ is Lipschitz continuous on $\mathcal{P}_{1}(\mathbb{R}^{n})$ with respect to the metric $d_{1}$. 
\end{lemma}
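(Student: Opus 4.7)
The plan is to exploit the variational characterization of the Mañé critical value given in equation \eqref{lab55}, which expresses
$$c(H_m) = \inf_{u \in C^{\infty}(\mathbb{R}^{n})} \sup_{x \in \mathbb{R}^{n}} H_m(x, Du(x)) = \inf_{u \in C^{\infty}(\mathbb{R}^{n})} \sup_{x \in \mathbb{R}^{n}} \bigl[H(x, Du(x)) - F(x,m)\bigr],$$
together with the Lipschitz estimate on $F$ with respect to its measure argument provided by assumption \textbf{(F2)}.

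The first step is to fix arbitrary $m_1, m_2 \in \mathcal{P}_1(\mathbb{R}^n)$ and observe that for every $x \in \mathbb{R}^n$,
$$|F(x,m_1) - F(x,m_2)| \leq \operatorname{Lip}_2(F)\, d_1(m_1, m_2).$$
Consequently, for any $u \in C^\infty(\mathbb{R}^n)$ and any $x \in \mathbb{R}^n$,
$$H(x, Du(x)) - F(x, m_1) \leq H(x, Du(x)) - F(x, m_2) + \operatorname{Lip}_2(F)\, d_1(m_1, m_2),$$
and taking the supremum in $x$ on both sides preserves the inequality because the added term is independent of $x$.

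The second step is to take the infimum over $u \in C^\infty(\mathbb{R}^n)$ and apply \eqref{lab55} to both $H_{m_1}$ and $H_{m_2}$, obtaining
$$\lambda(m_1) = c(H_{m_1}) \leq c(H_{m_2}) + \operatorname{Lip}_2(F)\, d_1(m_1,m_2) = \lambda(m_2) + \operatorname{Lip}_2(F)\, d_1(m_1,m_2).$$
Swapping the roles of $m_1$ and $m_2$ yields the symmetric inequality, so
$$|\lambda(m_1) - \lambda(m_2)| \leq \operatorname{Lip}_2(F)\, d_1(m_1, m_2),$$
which establishes the Lipschitz continuity with explicit constant $\operatorname{Lip}_2(F)$.

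I do not expect any serious obstacle: the argument is a one-line perturbation estimate built on top of the inf-sup formula \eqref{lab55}. The only small point requiring care is that, although \eqref{lab55} was originally justified by viewing $\mathbb{R}^n$ as a covering of $\mathbb{T}^n$ in the cited reference, it is stated in the excerpt as available in our noncompact setting for any Tonelli Hamiltonian, and both $H_{m_1}$ and $H_{m_2}$ are (strict) Tonelli Hamiltonians by assumptions \textbf{(F1)}--\textbf{(F2)}, so the formula applies to both. Everything else reduces to the triangle-type manipulation above.
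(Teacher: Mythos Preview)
Your proof is correct and follows essentially the same approach as the paper: both use the inf--sup characterization \eqref{lab55} of the Ma\~n\'e critical value together with assumption \textbf{(F2)} to obtain the Lipschitz bound with constant $\operatorname{Lip}_2(F)$. Your write-up is in fact slightly cleaner, since you spell out the sup-then-inf manipulation and the symmetry argument explicitly.
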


\begin{proof}
For any $m\in \mathcal{P}_{1}(\mathbb{R}^{n})$, since $L_m$ is a strict Tonelli Lagrangian,  by \eqref{lab55}  we have that
\begin{align}\label{3-100}
\lambda(m)=\inf_{u \in C^{\infty}(\mathbb{R}^{n})} \sup_{x \in \mathbb{R}^{n}} H_{m}(x, Du(x)).
\end{align}
So, the conclusion follows noting that   \eqref{3-100} and  {\bf (F2)} yield
\begin{align*}
|\lambda(m_{1})-\lambda(m_{2})| \leq \inf_{u \in C^{\infty}(\mathbb{R}^{n})} \sup_{x \in \mathbb{R}^{n}} \Big| F(x,m_{1})-F(x, m_{2}) \Big| \leq {\rm{Lip}}_{2}(F) d_{1}(m_{1},m_{2})
	\end{align*} 
for any   $m_{1}, m_{2} \in \mathcal{P}_{1}(\mathbb{R}^{n})$.
 \end{proof}
 
\medskip

\subsection{Main result 1}
We are now in a position to state and prove our first major result. 
\begin{theorem}[Existence of solutions of \eqref{lab2}]\label{MR1}
	Assume {\bf (F1)}, {\bf (F2)}, and {\bf (F4)}. 
	\begin{itemize}
		\item [($i$)] There exists at least one solution $(c(H_{\bar m}),\bar u,\bar m)$ of system $(\ref{lab2})$, i.e., $\mathcal{S}\neq\emptyset$.
		\item [($ii$)] Assume, in addition, {\bf (F3)}. Let
	$(c(H_{\bar m_{1}}), \bar u_{1}, \bar m_{1})$, $(c(H_{\bar m_{2}}), \bar u_{2}, \bar m_{2})\in \mathcal{S}$. Then,  
	\[
	F(x,\bar m_{1})= F(x,\bar m_{2}),\quad \forall x\in \mathbb{R}^{n}\quad  \text{and}\quad  c(H_{\bar m_{1}})=c(H_{\bar m_{2}}).
	\]
\end{itemize}
\end{theorem}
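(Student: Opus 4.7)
The plan is to establish part ($i$) via a Kakutani-type set-valued fixed point argument on the compact convex space of probability measures supported on $K_{0}$, and part ($ii$) via the classical Lasry-Lions monotonicity technique. The decisive role of assumption {\bf (F4)}, combined with the reversibility of $L$, is to confine the support of every Mather measure for $L_{m}$ to the same compact set $K_{0}$, uniformly in $m$.

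For part ($i$), fix $m\in\mathcal{P}_{1}(\mathbb{R}^{n})$. By {\bf (F1)}, $L_{m}=L+F(\cdot,m)$ is a reversible strict Tonelli Lagrangian, so Theorem \ref{wkt} produces a viscosity solution of $H_{m}(x,Du)=c(H_{m})$. Reversibility implies $L_{m}(x,v)\geq L_{m}(x,0)$, and \eqref{rev} gives $-c(H_{m})=\inf_{x}L_{m}(x,0)$. Hence for any Mather measure $\eta$ for $L_{m}$,
\[
-c(H_{m})=\int L_{m}\,d\eta\geq \int L_{m}(x,0)\,d(\pi_{1}\sharp\eta)(x)\geq \inf_{x}L_{m}(x,0)=-c(H_{m}),
\]
forcing $\eta$ to be concentrated on $\{(x,0):x\in\argmin L_{m}(\cdot,0)\}\subset K_{0}\times\{0\}$, the inclusion being guaranteed by {\bf (F4)}. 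Conversely, any such minimizer $x_{0}$ satisfies $D_{x}L_{m}(x_{0},0)=0$ and, by reversibility, $D_{v}L_{m}(x_{0},0)=0$, so $(x_{0},0)$ is a fixed point of the Euler-Lagrange flow and $\delta_{(x_{0},0)}$ is a genuine Mather measure.

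Setting $\mathcal{K}_{0}=\{m\in\mathcal{P}_{1}(\mathbb{R}^{n}):\supp(m)\subset K_{0}\}$, which is convex and compact in $d_{1}$ by Proposition \ref{cm}, define the set-valued map $\Phi:\mathcal{K}_{0}\rightrightarrows\mathcal{K}_{0}$ by $\Phi(m)=\{\pi_{1}\sharp\eta:\eta\text{ is a Mather measure for }L_{m}\}$. This map has nonempty convex values, and upper semicontinuity follows from the uniform convergence $F(\cdot,m_{k})\to F(\cdot,m)$ provided by {\bf (F2)} together with the standard stability of argmins under uniform convergence. The Kakutani-Fan-Glicksberg theorem yields $\bar m\in\Phi(\bar m)$; taking $\bar u$ to be the weak KAM solution of $H_{\bar m}(x,Du)=c(H_{\bar m})$, properties ($i$)--($iii$) of Definition \ref{ers} hold by construction. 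For ($iv$), Theorem \ref{regularity} gives $D\bar u(x)=D_{v}L_{\bar m}(x,0)=0$ for $\bar m$-a.e.\ $x$ (again by reversibility), hence $D_{p}H(x,D\bar u(x))=D_{p}H(x,0)=0$ on $\supp\bar m$ and the divergence equation holds trivially.

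For part ($ii$), I would apply the classical Lasry-Lions monotonicity argument: subtracting the two HJ equations for $\bar u_{1}$ and $\bar u_{2}$, multiplying by $\bar m_{1}-\bar m_{2}$, and integrating by parts using the divergence equations together with the convexity of $H$ yields
\[
\int_{\mathbb{R}^{n}}(F(x,\bar m_{1})-F(x,\bar m_{2}))\,d(\bar m_{1}-\bar m_{2})\leq 0.
\]
Combined with the inequality in {\bf (F3)}, this forces equality; the second clause of {\bf (F3)} then gives $F(\cdot,\bar m_{1})\equiv F(\cdot,\bar m_{2})$ on $\mathbb{R}^{n}$, so $H_{\bar m_{1}}\equiv H_{\bar m_{2}}$ and hence $c(H_{\bar m_{1}})=c(H_{\bar m_{2}})$. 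The main technical obstacle is making the Lasry-Lions integration by parts rigorous when the $\bar u_{i}$ are only Lipschitz viscosity solutions: a standard route is sup-convolution regularization followed by passage to the limit, with the unboundedness of $\mathbb{R}^{n}$ largely tamed by the fact that both $\bar m_{i}$ are concentrated in $K_{0}$.
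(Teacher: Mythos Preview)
Your proposal is correct and follows the paper's overall architecture: Kakutani on $\mathcal{P}(K_{0})$ for ($i$), Lasry--Lions monotonicity for ($ii$). Two points of execution differ. For Definition~\ref{ers}($iv$) the paper argues via the $\phi_{t}^{L_{\bar m}}$-invariance of the Mather measure, differentiating $t\mapsto\int f(\gamma_{t}(x))\,d\bar m$; your route---observing that reversibility forces $D\bar u(x)=D_{v}L_{\bar m}(x,0)=0$ on $\supp\bar m$ and hence $D_{p}H(x,D\bar u(x))=D_{p}H(x,0)=0$---is more elementary and exploits reversibility more fully. For ($ii$), the paper does not regularize the $\bar u_{i}$ by sup-convolution; instead it mollifies the measures, setting $m_{i}^{\epsilon}=\xi^{\epsilon}\star\bar m_{i}$ and $V_{i}^{\epsilon}=(\xi^{\epsilon}\star(\bar m_{i}D_{p}H(\cdot,D\bar u_{i})))/m_{i}^{\epsilon}$, then tracks a remainder $R_{\epsilon}$ arising from the commutator between convolution and $D_{p}H(\cdot,D\bar u_{i})$, showing $R_{\epsilon}\to0$ via the continuity of $D\bar u_{i}$ on the projected Mather set (Theorem~\ref{regularity}). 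Either regularization works here since both $\bar m_{i}$ live in $K_{0}$, but the paper's choice keeps the HJ equations intact and shifts all the approximation into the continuity equation. Your closed-graph sketch (``stability of argmins'') is right in spirit; the paper makes it precise through Lemma~\ref{LEM1}, the Lipschitz continuity of $m\mapsto c(H_{m})$.
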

	
\begin{remark}\em
By ($ii$) in Theorem \ref{MR1}, it is clear that each element of $\mathcal{S}$ has the form $(\bar \lambda,\bar u,\bar m)$, where $\bar m$ is a projected Mather measure and $\bar\lambda$ denotes the common Ma\~n\'e critical value of $H_{\bar m}$.
\end{remark}	
 
\begin{proof}[Proof of Theorem \ref{MR1}]
	($i$) For any measure $m \in \mathcal{P}_{1}(\mathbb{R}^{n})$, recall that $\phi_{t}^{L_{m}}$ denotes the  Euler-Lagrange flow of $L_m$, where $L_m$ is defined in \eqref{lm}. We divide the proof of ($i$) in two steps.
	
\medskip
	
\noindent {\bf S{\footnotesize TEP 1} }: we show the existence of Mather measures for $L_m$ for each $m\in\mathcal{P}_1(\mathbb{R}^n)$.
	
\medskip
\noindent From assumption {\bf (F4)}, for any $m \in \mathcal{P}_{1}(\mathbb{R}^{n})$ there exists $ x_{m} \in K_{0}$ such that 
	\begin{equation*} 
 \inf_{x \in \mathbb{R}^n}L_{m}(x,0)= \min_{x \in K_{0}} L_{m}(x,0) =L_{m}(x_{m},0),
 \end{equation*}
 where $K_0$ is the compact set as in {\bf (F4)}. 
 Note that the constant curve $t \mapsto x_{m}$ for $ t \in \mathbb{R}$ is a solution of 
 \[ \frac{d}{dt}D_{v}L_m(x, \dot x)=D_{x}L_m(x, \dot x),  
 \] 
i.e.,  $\phi_{t}^{L_{m}}( x_{m},0)=( x_{m},0)$ for all $t \in \mathbb{R}$.  Thus, the atomic measure $\delta_{(x_{m},0)}$, supported on $(x_{m},0)$,  is a $\phi_{t}^{L_{m}}$-invariant probability measure.
 Recalling the definition of Mather measures and  $x_m$, it follows  that $\delta_{(x_{m},0)}$ is a Mather measure for $L_{m}$ (see also \cite[Proposition 4.14.3]{bib:FA}). 

Consequently, for any $m \in \mathcal{P}_{1}(\mathbb{R}^{n})$ we have that
\begin{equation*}
\mathcal{M}^{m}_{0}=\left\{ (x_{m},0) :\ L_{m}(x_{m},0)=\min_{x \in K_{0}} L_{m}(x,0) \right\},
\end{equation*}
where $\mathcal{M}^{m}_{0}$ denotes the Mather set associated with $L_{m}$.
So, for each $m \in \mathcal{P}_{1}(\mathbb{R}^{n})$, all Mather measures associated with $L_{m}$ are supported in $K_{0} \times \{0\}$.

	\medskip

	\noindent {\bf S{\footnotesize TEP 2} }: we show the existence of solutions of \eqref{lab2}.
	
\medskip  
\noindent 	From Step 1, for any $m\in\mathcal{P}_1(\mathbb{R}^n)$, there is a Mather measure $\eta_{m}$ associated with $L_m$, i.e., $\eta_{m}\in \mathcal{M}^*_{L_m}$.
	Consider the set-valued map 
	 $$\Psi: \mathcal{P}(K_{0}) \rightrightarrows \mathcal{P}(K_{0}),\quad m \mapsto \Psi(m),$$ 
	 where $$ \Psi(m):=\left\{\pi_{1} \sharp \eta_{m}:\ \eta_{m} \in \mathcal{M}_{L_m}^{\ast} \right\}.$$ 
	As is customary in MFG theory, we will apply Kakutani's theorem (see, for instance, \cite{bib:BK}) to show that there exist a fixed point $\bar{m}$ of $\Psi$.

	 Observe that the metric space $(\mathcal{P}(K_0),d_1)$ is convex and compact due to 
	 Prokhorov's theorem (see, for instance, \cite{bib:BB}).  
	 Since $\Psi$ has nonempty convex values, the only hypothesis of
	 Kakutani's theorem we need to check is that $\Psi$ has closed graph: for any pair of sequences $\{m_j\}_{j\in\mathbb{N}}\subset\mathcal{P}(K_{0})$,  $\{\mu_{j}\}_{j\in\mathbb{N}}\subset\mathcal{P}(K_{0})$ such that
	  \[
	  m_j \stackrel{w^*}{\longrightarrow}m,\quad  \mu_{j} \stackrel{w^*}{\longrightarrow}\mu,\ \text{as}\ j\to+\infty \quad \text{and} \quad \mu_j\in \Psi(m_j) \quad \text{for all j} \in\mathbb{N},
	  \]
	  we must prove that $\mu \in\Psi(m)$. Since $\mu_j\in \Psi(m_j)$ and $\mu_{j} \stackrel{w^*}{\longrightarrow}\mu$  as $j\to+\infty$, there are Mather measures $\eta_{m_j}$ and a measure $\eta\in\mathcal{P}_1(\mathbb{R}^n\times\mathbb{R}^n)$ such that 
	 \begin{align}\label{meas}
	 \mu_j=\pi_1\sharp \eta_{m_j}, \quad \eta_{m_j} \stackrel{w^*}{\longrightarrow}\eta,\  \text{as}\ j\to+\infty \quad \text{and} \quad \mu=\pi_1\sharp \eta.
	 \end{align}
	 So, it suffices to show that $\eta$ is a Mather measure for $L_m$. For this purpouse, let us consider the sequence of Ma\~n\'e's critical values $\{\lambda(m_{j})\}_{j \in \mathbb{N}}$. By \eqref{rev} and  the definition of Mather measure, we get that
	 \begin{align}\label{3-101}
	 \lambda(m_{j}) =-\int_{\mathbb{R}^{n} \times \mathbb{R}^{n}}{ L_{m_j}(x,v)\ d\eta_{m_{j}}}.
	 \end{align}
	 By \eqref{meas} and \eqref{3-101}, we deduce that $\lambda(m_{j})$ converges to some $\tilde\lambda \in \mathbb{R}$ and 
	 \[
	 \tilde\lambda =-\int_{\mathbb{R}^{n} \times \mathbb{R}^{n}}{ L_{m}(x,v)\ d\eta}.
	 \]
	  By Lemma \ref{LEM1}, we have that
	 \[
	 \tilde\lambda=\lim_{j \to \infty} \lambda(m_{j}) =\lambda(\lim_{j \to \infty} m_{j})=\lambda(m).
	 \]
	 Therefore, $\tilde\lambda$ is the Ma\~n\'e critical value of $H_m$ and $\eta$ is a Mather measure for $L_m$.
	This shows that $\Psi$ has closed graph. So, 
	 by Kakutani's  theorem, there exists $\bar m\in \mathcal{P}(K_0)$ such that $\bar m\in\Psi(\bar m)$.
	 
	 Then, by Theorem \ref{wkt}, there is a global viscosity solution $\bar{u}$ of $H_{\bar{m}}(x,Du)=c(H_{\bar m})$, where $H_{\bar m}$ is defined in \eqref{hm}. Moreover, by Theorem \ref{regularity}, $\bar u$ is differentiable $\bar m$ -a.e because $\bar m$ is supported on a  subset of the projected Mather set of $H_{\bar m}$.
	  Again by Theorem \ref{regularity} we deduce that the map $\pi_1: \supp(\eta_{\bar m}) \to \supp(\bar m)$ is one-to-one and its inverse is given by $x \mapsto (x, D_{p}H(x,D\bar u(x)))$ on $\supp(\bar m)$. 
	 
	 For any $x\in \supp(\bar m)$, let $\gamma_{t}(x)=\pi_1\circ \phi^{L_{\bar m}}_t(x,D_pH_{\bar m}(x,D\bar u(x))$. Then, we have that 
	 $$\frac{d}{dt}\gamma_{t}(x)=D_{p}H_{\bar m}\left(\gamma_{t}(x), D\bar u(\gamma_{t}(x))\right).$$ 
	 Since  $\eta_{\bar m}$ is $\phi_{t}^{L_{\bar m}}$-invariant and $\bar m$ is $\gamma_{t}$-invariant, for any function $f \in C^{\infty}(\mathbb{R}^{n})$ we get that
	 \begin{align*} 
	 0 &=\frac{d}{dt}\int_{\mathbb{R}^{n}}{f(\gamma_{t}(x))\ d\bar m(x)}= \int_{\mathbb{R}^{n}}{\big\langle Df(\gamma_{t}(x)), D_{p}H_{\bar m}(\gamma_{t}(x), D\bar u(\gamma_{t}(x))) \big\rangle\ d\bar m(x)} \\&= \int_{\mathbb{R}^{n}}{\big\langle Df(x), D_{p}H_{\bar m}(x, D\bar u(x)) \big\rangle\ d\bar m(x)}. 
	 \end{align*} 
	 Hence, $\bar m$ satisfies the second equation of system $(\ref{lab2})$ in the sense of distributions. This completes the proof of ($i$). 
	 
	 ($ii$) The proof of uniqueness, which is similar to the one in 
	  \cite{bib:CAR}, is given in the Appendix.
	 \end{proof}

\medskip

\section{MFG system with finite horizon}
This section is devoted to the second main result of this paper---the convergence result. 
Let us recall   
the MFG system with finite horizon \eqref{lab1}, i.e., 
\begin{equation*}
\begin{cases}
\ -\partial _{t} u^{T} + H(x, Du^{T})=F(x, m^{T}(t)) & \text{in} \quad (0,T)\times \mathbb{R}^{n}, \\ \  \partial _{t}m^{T}-\text{div}\Big(m^{T}D_{p}H(x, Du^{T})\Big)=0  & \text{in} \quad (0,T)\times \mathbb{R}^{n},  \\ \ m^{T}(0)=m_{0}, \quad u^{T}(T,x)=u^{f}(x), &  x\in \mathbb{R}^{n}.
\end{cases}
\end{equation*}
In this section, we will  assume {\bf (F1)}, {\bf (F2)}, {\bf (F3)},  {\bf (F4)}, and the following additional conditions.
\begin{itemize}
	\item [\textbf{(U)}] $u^f\in C^1(\mathbb{R}^{n})\cap \rm{Lip}(\mathbb{R}^{n})$ satisfies $u^{f}(x) \geq -c_{0}$ for all $x \in \mathbb{R}^{n}$ and  some constant $c_{0}\geq 0$.
	\item [\textbf{(M)}] $m_0$ is an absolutely continuous measure with respect to the Lebesgue measure  and has compact support contained in $K_{0}$, where $K_0$ is as in {\bf (F4)}. Denote by $m_{0}$ the density function of the measure $m_{0}$, i.e., $dm_{0}=m_{0}dx$.

	\item [\textbf{(F5)}] $
		\bigcap_{m \in \mathcal{P}_{1}(\mathbb{R}^{n})}{\argmin_{x \in K_0}{L_{m}( x,0)}} \not= \emptyset,
	$
where $K_0$ is as in {\bf (F4)} and $L_m$ is defined in \eqref{lm}.
\end{itemize}

\vskip.3cm

\begin{remark}\em
We observe that assumption {\bf (F5)} holds true for $L_{m}(x,v)=L(v)+f(x)g(m)$ as in Example \ref{exa}.
\end{remark} 

\begin{definition}
A pair $(u^T,m^T)\in W^{1,\infty}_{loc}([0,T] \times\mathbb{R}^n)\times L^1([0,T] \times\mathbb{R}^n)$ is said to be a solution of system \eqref{lab1} if: 
\begin{itemize}
\item [(i)] the first equation in \eqref{lab1} is satisfied in the viscosity sense;
\item [(ii)] 	the second  equation in \eqref{lab1} is satisfied in the sense of distributions. 
\end{itemize}
\end{definition}
Under assumptions {\bf (F1)}, {\bf (F2)}, {\bf (F3)}, {\bf (U)}, and {\bf (M)},
 for any given $T>0$, there exists a unique solution of \eqref{lab1} (see, for instance, \cite[Theorem 4.1 and Remark 4.2]{bib:CAR}). 
 From now on,  for any given $T>0$ we denote by $(u^{T}, m^{T})$ the unique solution of \eqref{lab1}.

  \medskip
  
 Let $R_{0}>0$ be such that $K_{0} \subset \overline{B}_{R_{0}}$, where $K_0$ is as in {\bf (F4)}.

\medskip  

Let $\ell$ be a time-dependent Tonelli Lagrangian and let $u$ be a continuous, bounded below function on $\mathbb{R}^n$. For any given $x\in \mathbb{R}^n$, and $t$, $T\in\mathbb{R}$ with $0<t<T$,  classical results (see, for instance, \cite[Theorem 6.1.2]{bib:SC}) ensure the existence of solutions of 
 the following minimization problem\begin{equation*}\label{lab4}
\inf_{\xi \in \Gamma_{t,T}(x)}\left\{\int_{t}^{T}{\ell(s,\xi(s), \dot \xi(s)) ds} + u(\xi(T))\right\},
\end{equation*}
where  
\begin{equation*}\label{gam}
\Gamma_{t,T}(x):=\{\gamma\in AC([t,T];\mathbb{R}^n): \gamma(t)=x\}.
\end{equation*}

For any given $T>0$, let $(u^T,m^T)$ be the unique solution of  \eqref{lab1}. For each $x\in\mathbb{R}^n$, each $t\in[0,T]$, consider the minimization problem 
	\begin{equation}\label{infimum}
	\inf_{\xi \in \Gamma_{t,T}(x)}\left\{\int_{t}^{T}{L_{m^T (s)}\left(\xi(s), \dot \xi(s)\right)\ ds} + u^{f}(\xi(T))\right\}.
	\end{equation}  
 Define 
\begin{equation*}
\Gamma^{*}_{t,T}(x):=\Big\{ \xi^{*} \in \Gamma_{t,T}(x) :\ \xi^{*}\ \text{is a solution of problem \eqref{infimum}} \Big\}. 
\end{equation*}
Consider the set-valued map  
\[
\Gamma^{*}_{t,T}: K_0\rightrightarrows C^1([t,T];\mathbb{R}^n), \quad x\mapsto \Gamma^{*}_{t,T}(x).
\]
 It is easy to check that  $\Gamma^{*}_{t,T}$ has closed graph with respect to the $C^{1}$-topology, which implies that $\Gamma^{*}_{t,T}$ is Borel measurable with closed values (see, for instance, \cite[Proposition 9.5]{bib:C}).   Therefore, by the measurable selection theorem (see, for instance, \cite{bib:AFS}), there exists a measurable selection of $\Gamma^{*}_{t,T}$, that is, $\gamma^*: K_0\to C^1([t,T];\mathbb{R}^n)$ such that $\gamma^*(x)\in  \Gamma^{*}_{t, T}(x)$ for all $x\in K_0$.
 For any $s\in[t,T]$, let us consider the {\em evaluation map} $ e_s:C^1([t,T];\mathbb{R}^n) \to \mathbb{R}^n$, that is,
 \[
 e_s( \gamma) = \gamma(s).
 \]
Then, we  define the {\it optimal flow} as follows:
\begin{equation*}
\phi:[t,T]\times K_0\to \mathbb{R}^n,\quad \phi(s,x)=e_s(\gamma^*(x))\quad (s\in[t,T], x\in K_0).
\end{equation*}
Moreover, from \cite[Lemma 4.15]{bib:CN}, we have that, for any $T>0$, 
\begin{equation}\label{sol}
m^{T}(s)=\phi(s, \cdot) \sharp m_{0}, \quad \forall s \in [t,T].
\end{equation} 

\medskip
\subsection{Excursion time of minimizers}
Before proving Theorem \ref{MR2} below, we derive preliminary results of interest in their own right.

\begin{theorem}[Excursion time from a compact set]\label{4-3}
	For any $R\geq R_0$  
	there is $M_R>0$ such that for any $T> 1$, any $\tilde x\in \overline{B}_R$, and any 
	      $\xi^{\ast}\in \Gamma^{*}_{0,T}(\tilde{x})$, 
	we have that
	\[
	\mathcal{L}^1\Big(\{s\in[0,T]:\ \xi^{*}(s)\in \overline{B}_{R}\}\Big)\geq T-M_R.
	\]
\end{theorem}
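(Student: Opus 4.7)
The plan is to compare $\xi^*$ with a simple competitor pinned to the common minimizer $x_* \in \bigcap_{m \in \mathcal{P}_{1}(\mathbb{R}^{n})}\argmin_{x \in K_0}L_{m}(x,0)$ supplied by {\bf (F5)}. Define $\eta \in \Gamma_{0,T}(\tilde x)$ by $\eta(s) = \tilde x + s(x_* - \tilde x)$ on $[0,1]$ and $\eta(s) = x_*$ on $[1,T]$. The quadratic upper bound $L(x,v) \leq 4\beta|v|^2 + \alpha$ from Remark~\ref{re2.1}($g$), the uniform $L^\infty$ bound on $F$ in {\bf (F1)}, the Lipschitz estimate on $u^f$ in {\bf (U)}, and $|\dot\eta(s)| \leq R + |x_*|$ on $[0,1]$ yield $J(\eta) \leq \int_1^T L_{m^T(s)}(x_*, 0)\,ds + \kappa_1(R)$ with $\kappa_1(R)$ independent of $T$. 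Combining this with $J(\xi^*) \leq J(\eta)$, the bound $u^f(x_*) - u^f(\xi^*(T)) \leq c_0 + u^f(0) + \mathrm{Lip}(u^f)|x_*|$ coming from {\bf (U)}, and the elementary estimate $|L_{m^T(s)}(x_*, 0)| \leq \alpha + \|F\|_\infty$ on the missing slice $[0,1]$, one arrives at
\[
\int_0^T \Big[L_{m^T(s)}(\xi^*(s), \dot\xi^*(s)) - L_{m^T(s)}(x_*, 0)\Big]\,ds \leq \kappa_2(R),
\]
with $\kappa_2(R)$ again $T$-independent.

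For the matching lower bound, set $E := \{s \in [0,T] : \xi^*(s) \notin \overline{B}_R\}$. Since $K_0 \subset \overline{B}_{R_0} \subset \overline{B}_R$, for $s \in E$ assumption {\bf (F4)} yields $L_{m^T(s)}(\xi^*(s), 0) \geq L_{m^T(s)}(x_*, 0) + \delta_0$, while on $\{s: \xi^*(s) \in K_0\}$ assumption {\bf (F5)} yields $L_{m^T(s)}(\xi^*(s), 0) \geq L_{m^T(s)}(x_*, 0)$ (and {\bf (F4)} handles the intermediate case $\xi^*(s) \in \overline{B}_R \setminus K_0$ with at least the same gain). In all cases,
\[
L_{m^T(s)}(\xi^*(s), 0) \geq L_{m^T(s)}(x_*, 0) + \delta_0 \mathbf{1}_E(s), \qquad \text{a.e. } s \in [0,T].
\]

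The crucial step is to upgrade this pointwise comparison at $v=0$ to one valid for the actual velocity $\dot\xi^*(s)$. Here the reversibility of $L$ is decisive: differentiating the identity $L(x,v) = L(x,-v)$ in $v$ and evaluating at $v=0$ forces $D_v L(x, 0) \equiv 0$. The second-order Taylor expansion of $L(x,\cdot)$ at the origin, together with the uniform convexity bound $D^2_{vv}L \geq I/C_1$ of Definition~\ref{def2}($a$) (here $C_1$ is the Lagrangian constant), then produces
\[
L(x,v) \geq L(x, 0) + \frac{|v|^2}{2C_1}, \qquad \forall (x,v) \in \mathbb{R}^n \times \mathbb{R}^n.
\]
Adding $F(\xi^*(s), m^T(s))$ to both sides and combining with the previous inequality,
\[
L_{m^T(s)}(\xi^*(s), \dot\xi^*(s)) - L_{m^T(s)}(x_*, 0) \geq \delta_0 \mathbf{1}_E(s) + \frac{|\dot\xi^*(s)|^2}{2C_1}.
\]

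Integrating on $[0,T]$ and comparing with the upper estimate from the first paragraph yields
\[
\delta_0\, \mathcal{L}^1(E) + \frac{1}{2C_1}\int_0^T |\dot\xi^*(s)|^2\,ds \leq \kappa_2(R),
\]
whence $\mathcal{L}^1(E) \leq \kappa_2(R)/\delta_0 =: M_R$, as claimed. The hard part is the reversibility trick above: without it, the first-order term $\langle D_v L(x, 0), v\rangle$ in the Taylor expansion would only yield $L(x,v) \geq L(x, 0) - \alpha|v| + |v|^2/(2C_1)$, producing an unmanageable $-\alpha\int_0^T|\dot\xi^*|\,ds$ correction; the best a priori bound $\int_0^T|\dot\xi^*|^2\,ds = O(T)$ (obtained from coercivity plus optimality) only gives $\int_0^T|\dot\xi^*|\,ds = O(T)$, which cannot be absorbed into a $T$-independent bound for $\mathcal{L}^1(E)$.
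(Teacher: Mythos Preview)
Your proof is correct and follows essentially the same strategy as the paper: compare $\xi^{*}$ with a competitor that travels linearly to the common minimizer $x_{*}$ supplied by {\bf(F5)} and then stays there, use optimality for the upper bound, and use reversibility plus {\bf(F4)} to extract the $\delta_{0}$ gap on the excursion set $E$ for the lower bound. The only difference is that the paper obtains $L(x,v)\ge L(x,0)$ directly from convexity and reversibility (via $L(x,0)=L\bigl(x,\tfrac{v+(-v)}{2}\bigr)\le\tfrac12\bigl(L(x,v)+L(x,-v)\bigr)=L(x,v)$), whereas your Taylor-expansion variant yields the sharper inequality $L(x,v)\ge L(x,0)+|v|^{2}/(2C_{1})$, giving as a free byproduct a global $L^{2}$ bound on $\dot\xi^{*}$ that the paper establishes separately in Lemma~\ref{lemma}.
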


\begin{proof}
Define
\begin{equation*}
b=\min \big\{0,\ \inf_{\substack{(x,v) \in \mathbb{R}^{n} \times \mathbb{R}^{n} \\ m \in \mathcal{P}_{1}(\mathbb{R}^{n})}} L_{m}(x,v) \big\}.
\end{equation*}
 By assumption {\bf (F5)},
 there exists $x_{T} \in K_0$ such that 
 $$\displaystyle{\min_{x \in K_0}}\ {L_{m^T (s)}(x,0)}=L_{m^T (s)}(x_{T},0)$$ 
  for $s \in [T_{0}, T]$. Consider the curve 
$$ \xi_{0}(s):= \begin{cases} 
	\tilde x+\frac{x_{T}-\tilde x}{T_{0}} \cdot s, & \quad s\in [0, T_{0}), \\ x_{T}, & \quad s \in [T_{0}, T].
	\end{cases}
	$$
 From the minimality of $\xi^*$, we  have that
 \begin{align*} 
& \int_{0}^{T}{L_{m^T (s)}(\xi^{\ast}(s), \dot\xi^{\ast}(s))\ ds}+u^{f}(\xi^{\ast}(T)) \\
  \leq & \int_{0}^{T_{0}}{L_{m^T (s)}(\xi_{0}(s), \dot\xi_{0}(s))\ ds} + \int_{T_0}^{T}{L_{m^T (s)}(x_{T}, 0)\ ds}+u^{f}(\bar x)\\
  =&\ c(T_{0},\tilde x,m^T)+\int_{T_0}^{T}{L_{m^T (s)}(x_{T}, 0)\ ds},
  \end{align*} 
 where  $$c(T_{0},\tilde x,m^T)=\int_{0}^{T_{0}}{L_{m^T (s)}(\xi_{0}(s), \dot\xi_{0}(s))\ ds}+u^{f}(x_{T}).$$ By our assumptions, we deduce that
 $
 |c(T_{0},\tilde x,m^T)|\leq C(T_0,R),
 $
 where $C(T_0,R)>0$ depends only on $T_0$ and $R$.
 On the other hand, by the  convexity and reversibility of $L$ with respect to the  $v$, we deduce that $L(x,v)\geq L(x,0)$. Thus, we have that 
	\begin{align*} 
	&\int_{0}^{T}{L_{m^T (s)}(\xi^{\ast}(s), \dot\xi^{\ast}(s))\ ds}  \\ 
	 \geq &\ bT_0+ \int_{T_0}^{T}{L_{m^T (s)}(\xi^{\ast}(s), 0){\bf 1}_{\overline{B}_{R}}(\xi^{\ast}(s))\ ds} +  \int_{T_0}^{T}{L_{m^T (s)}(\xi^{\ast}(s), 0){\bf 1}_{\mathbb{R}^{n} \backslash \overline{B}_{R}}(\xi^{\ast}(s))\ ds}\\
	 \geq &\ bT_0+ \int_{T_0}^{T}{L_{m^T (s)}(x_{T}, 0){\bf 1}_{\overline{B}_{R}}(\xi^{\ast}(s))\ ds} +  \int_{T_0}^{T}{\inf_{x \in \mathbb{R}^{n}} L_{m^T (s)}(x, 0){\bf 1}_{\mathbb{R}^{n} \backslash \overline{B}_{R}}(\xi^{\ast}(s))\ ds}.
	 \end{align*} 
	 By combining the above inequalities, we deduce that 
	\begin{align*}
		\int_{T_0}^{T}{\Big( L_{m^T (s)}(x_{T},0)-\inf_{x \in \mathbb{R}^n \backslash \overline{B}_{R}}{L_{m^T (s)}(x,0)} \Big) \Big( 1-{\bf 1}_{\overline{B}_{R}}(\xi^{\ast}(s))\Big)\ ds} \geq bT_0-C(T_0,R).
	\end{align*}
	By assumption {\bf (F4)} and the above inequality, we get 
	\[
	\int_{T_0}^{T}{ \Big( 1-{\bf 1}_{\overline{B}_{R}}(\xi^{\ast}(s))\Big)\ ds} \leq \frac{C(T_0,R)-bT_0}{\delta_0}=:M_R
		\]
which yields the conclusion.
\end{proof}

\begin{remark}\label{re}\em
In view of the proof of Theorem \ref{4-3}, it is clear that the result still holds true if assumption {\bf (F5)} is replaced by the following
\vskip.2cm

\noindent \textbf{(F5')}
 Let $(u^T,m^T)$ be a solution of system \eqref{lab1}.
 There exists $T_0\in(0,T)$ such that 
 
	\begin{equation*}
		\bigcap_{T_0\leq s\leq T }{\argmin_{x \in K_0}{L_{m^T(s)}( x,0)}} \not= \emptyset.
	\end{equation*}
	It is notable that {\bf (F5)} is equivalent to the  assumption: for every compact subset $\mathcal{J}$ of $\mathcal{P}_1(\mathbb{R}^n)$, 
	\begin{equation}\label{assplus}
		\bigcap_{m \in \mathcal{J}}{\argmin_{x \in K_0}{L_{m}( x,0)}} \not= \emptyset,
	\end{equation}
where $K_0$ is as in {\bf (F4)} and $L_m$ is defined in \eqref{lm}. Since $\{m^{T}(s)\}_{s \in [T_0,T]}$ is a compact subset of $\mathcal{P}_{1}(\mathbb{R}^{n})$, 
	then assumption {\bf (F5')} is just an application of condition \eqref{assplus} to $\{m^{T}(s)\}_{s \in [T_0,T]}$.
\end{remark}
\begin{lemma}\label{lemma}
Let $R\geq R_0$. Then there exists a constant $\kappa(R)>0$  such that for any $T > 1$, any $0
\leq t\leq T$,  any $x \in \overline{B}_{R}$, and any $\xi^{*} \in \Gamma^{*}_{t,T}(x)$ we have that 
\begin{equation*}
\int_{t}^{(t+1) \wedge T}{ |\dot\xi^{*}(s)|^{2}\ ds} \leq \kappa(R).
\end{equation*}
\end{lemma}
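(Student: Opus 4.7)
The plan is to reduce the statement to a uniform upper bound on the action
\[
\int_t^{\tau} L_{m^T(s)}(\xi^*, \dot\xi^*)\,ds, \quad \tau := (t+1)\wedge T,
\]
after which the quadratic lower bound ($g$) of Remark \ref{re2.1}, namely $L_{m^T(s)}(y,v) \geq \frac{1}{4\beta}|v|^2 - \alpha - \|F\|_\infty$, immediately converts it into the desired bound on $\int_t^{\tau}|\dot\xi^*|^2\,ds$. Since $\xi^*$ is optimal on $[t,T]$, it is also optimal on $[\tau,T]$ starting from $\xi^*(\tau)$, so the dynamic programming principle writes this action as a telescopic difference
\[
\int_t^\tau L_{m^T(s)}(\xi^*,\dot\xi^*)\,ds = u^T(t,x) - u^T(\tau,\xi^*(\tau)),
\]
and it suffices to bound this difference independently of $t$ and $T$.

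\emph{Case $t+1 \leq T$.} Fix $\hat x \in \bigcap_{m} \argmin_{K_0} L_m(\cdot,0)$ provided by {\bf (F5)}; together with {\bf (F4)}, $\hat x$ is a global minimizer of $L_m(\cdot,0)$ on $\mathbb{R}^n$ for every $m$, so
\[ L_{m^T(s)}(\hat x,0) = \ell(s) := \inf_{y\in\mathbb{R}^n} L_{m^T(s)}(y,0), \quad \forall s. \]
For the upper bound on $u^T(t,x)$, I use the competitor which moves linearly from $x$ to $\hat x$ on $[t,t+1]$ (whose speed $|\hat x - x| \leq R + R_0 \leq 2R$) and sits at $\hat x$ on $[t+1,T]$; by ($g$) and the identity above this yields
\[
u^T(t,x) \leq 16\beta R^2 + \alpha + \|F\|_\infty + \int_{t+1}^T \ell(s)\,ds + u^f(\hat x).
\]
For the lower bound on $u^T(t+1,\xi^*(t+1))$, reversibility and convexity of $L$ in $v$ give $L(y,v) = \tfrac12(L(y,v)+L(y,-v)) \geq L(y,0)$, hence pointwise $L_{m^T(s)}(y,v) \geq \ell(s)$; applying this along any admissible curve and using $u^f \geq -c_0$ gives
\[
u^T(t+1,\xi^*(t+1)) \geq \int_{t+1}^T \ell(s)\,ds - c_0.
\]
Subtracting, the $\int_{t+1}^T \ell(s)\,ds$ terms cancel, leaving a bound depending only on $R$ (through $\alpha$, $\beta$, $\|F\|_\infty$, $c_0$, and $u^f(\hat x)$).

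\emph{Case $t+1 > T$.} Here $\tau = T$ and the interval $[t,T]$ has length less than $1$. A direct estimate works: the constant competitor $\tilde\xi \equiv x$ gives $u^T(t,x) \leq (\alpha + \|F\|_\infty) + |u^f(x)|$ with $|u^f(x)| \leq C(R)$ by the Lipschitz continuity of $u^f$, while $u^T(T,\xi^*(T)) = u^f(\xi^*(T)) \geq -c_0$ is immediate. The action is again bounded by a constant depending only on $R$.

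The main obstacle is the cancellation of the $T$-dependent bulk integral $\int_{t+1}^T \ell(s)\,ds$ in Case 1: the upper bound on $u^T(t,x)$ from the competitor and the lower bound on $u^T(t+1,\xi^*(t+1))$ coming from reversibility both carry this term, and they must match exactly. This is precisely where assumption {\bf (F5)} is used---it supplies a single point $\hat x$ minimizing $L_m(\cdot,0)$ simultaneously for every $m$. Any weaker hypothesis under which the pointwise minimizer depends on $s$ would break the cancellation and leave a spurious $O(T-t)$ term that grows with the horizon.
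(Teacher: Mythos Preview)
Your proof is correct and follows essentially the same approach as the paper: both use the competitor that moves linearly to a common minimizer $\hat x$ of $L_m(\cdot,0)$ (supplied by {\bf (F5)}) and then sits, together with the reversibility bound $L_m(y,v)\geq L_m(y,0)\geq L_m(\hat x,0)$ on the tail $[t+1,T]$, so that the $\int_{t+1}^T L_{m^T(s)}(\hat x,0)\,ds$ contributions cancel. The only difference is organizational---you package the argument via the dynamic-programming identity $\int_t^\tau L_{m^T(s)}(\xi^*,\dot\xi^*)\,ds = u^T(t,x) - u^T(\tau,\xi^*(\tau))$ and bound each term separately, whereas the paper sandwiches $u^T(t,x)$ directly between the competitor upper bound and the lower bound obtained by applying $(g)$ and reversibility along $\xi^*$ itself.
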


\begin{proof}
We consider two cases.

\medskip
\noindent {\bf C\footnotesize{ASE} 1}:
$ t \in [0,T-1)$. 

\medskip

\noindent By assumptions {\bf (F4)} and {\bf (F5)}, for any $T \geq 1$ there exists $x_{T} \in K_{0}$ such that for any $s \in [0,T]$ we have that
\begin{equation*}
L_{m^{T}(s)}(x_{T},0)= \min_{x \in K_{0}}\ L_{m^{T}(s)}(x,0) \leq \inf_{x \in \mathbb{R}^{n} \backslash K_{0}} \ L_{m^{T}(s)}(x,0) - \delta_{0}.
\end{equation*}
So, in view of ($f$) in Remark \ref{re2.1}  and the reversibility of $L$, we get that
\begin{align}\label{si}
\begin{split}
u^{T}(t,x)
& =  \int_{t}^{T}{L_{m^{T}(s)}(\xi^{*}(s), \dot\xi^{*}(s))\ ds} + u^{f}(\xi^{*}(T)) \\
& =  \int_{t}^{t+1}{L_{m^{T}(s)}(\xi^{*}(s), \dot\xi^{*}(s))\ ds} + \int_{t+1}^{T}{L_{m^{T}(s)}(\xi^{*}(s), \dot\xi^{*}(s))\ ds}+ u^{f}(\xi^{*}(T)) \\
& \geq  \frac{1}{4\beta} \int_{t}^{t+1}{|\dot\xi^{*}(s)|^{2}\ ds} - \alpha + \int_{t+1}^{T}{L_{m^{T}(s)}(x_{T},0)\ ds} - c_{0}.
\end{split}
\end{align}
For $x \in \overline{B}_{R}$,   define
\begin{equation*}
\sigma^{x}_{T}(s)=\left((s-t)\wedge 1\right)x_{T} + \left((1-(s-t))\wedge 1)\right)x, \quad t\leq s \leq T.
\end{equation*}
Observe that $\sigma^{x}_{T}(t)=x$ and $\sigma^{x}_{T}(s)=x_{T}$ for any $s \geq t+1$.
We deduce that
\begin{align*}
u^{T}(t,x)
= &\ \int_{t}^{T}{L_{m^{T}(s)}(\xi^{*}(s), \dot\xi^{*}(s))\ ds} + u^{f}(\xi^{*}(T))\\
\leq &\ \int_{t}^{t+1}{L_{m^{T}(s)}(\sigma^{x}_{T}(s), \dot\sigma^{x}_{T}(s))\ ds} + \int_{t+1}^{T}{L_{m^{T}(s)}(x_{T},0)\ ds} + u^{f}(x_{T}).
\end{align*}
Since
$
|\sigma^{x}_{T}(s)| \leq 2R$ and $|\dot\sigma^{x}_{T}(s)| \leq 2R$ for all $s \in [t,T]$,
we have that
\begin{equation}\label{no}
u^{T}(t,x) \leq C(R) + u^{f}(x_{T}) + \int_{t+1}^{T}{L_{m^{T}(s)}(x_{T},0)\ ds},
\end{equation}
where 
$$C(R)=\displaystyle{\max_{|x|,|v| \leq 2R}}\ |L(x,v)| + \|F(\cdot, m^{T})\|_{\infty}.$$ 
Note that $x_{T}\in K_0$. Since $K_0$ is compact,  by {\bf (U)} we deduce that $u^{f}(x_{T})$ is bounded. 
So, combining \eqref{si} and \eqref{no}, we conclude that
\begin{equation*}
\int_{t}^{t+1}{|\dot\xi^{*}(s)|^{2}\ ds} \leq 4\beta\big(\alpha+C(R)+2c_{0}\big)=:\kappa_1(R).
\end{equation*}

\medskip
\noindent {\bf C\footnotesize{ASE} 2}:
$ t \in [T-1,T]$. 

\medskip 
\noindent The proof is similar to the one above. On the one hand,  one has that
\begin{align*}
u^{T}(t,x) \geq \frac{1}{4\beta} \int_{t}^{T}{|\dot\xi^{*}(s)|^{2}\ ds} -\alpha -c_{0}.
\end{align*}
On the other hand, by using the curve $\rho(s)\equiv x$ we get an upper bound of the form
\begin{align*}
u^{T}(t,x) \leq \int_{t}^{T}{L_{m^{T}(s)}(x,0)\ ds} + u^{f}(x).
\end{align*}
Therefore, combining the above inequalities we obtain the desired result.
\end{proof}

\begin{remark}\label{re400}\ 
\begin{itemize}\em
\item[(a)] By minor adaptations of the above proof one can extend the conclusion of  Lemma \ref{lemma} as follows: for every $R\geq R_0$ and $M>0$ there exists a constant $\kappa(R, M)>0$  such that for any $T > 1$, any $0
\leq t\leq T$,  any $x \in \overline{B}_{R}$, and any $\xi^{*} \in \Gamma^{*}_{t,T}(x)$ we have that 
\begin{equation*}
\int_{t}^{(t+M) \wedge T}{ |\dot\xi^{*}(s)|^{2}\ ds} \leq \kappa(R,M).
\end{equation*}
\item[(b)] The proof of Lemma \ref{lemma} shows that $\kappa(R)=C R^{2}$ for some  constant $C>0$.
\end{itemize}
\end{remark}
\medskip
\begin{corollary}\label{distance}
Let $R\geq R_0$. Then, there exists a constant $\chi(R)>0$  such that for any $T > 1$, any $0
\leq t\leq T$,  any $x \in \overline{B}_{R}$, and any $\xi^{*} \in \Gamma^{*}_{t,T}(x)$ we have that  
\begin{equation*}
\sup_{s \in [t,T]} |\xi^{*}(s)| \leq \chi(R).
\end{equation*}
\end{corollary}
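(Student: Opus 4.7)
The plan is to combine the excursion-time estimate of Theorem \ref{4-3} with the energy bound of Lemma \ref{lemma}, in the strengthened form of Remark \ref{re400}(a), via the dynamic programming principle. Intuitively, the excursion bound says that an optimal trajectory issuing from $\overline{B}_R$ cannot spend more than $M_R$ units of time outside $\overline{B}_R$, while the energy bound controls how far it can wander during any bounded window of time.

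The first step is to note that the excursion-time estimate carries over from $\Gamma^{\ast}_{0,T}$-curves to $\Gamma^{\ast}_{t,T}$-curves with the same constant $M_R$: the comparison curve used in the proof of Theorem \ref{4-3} is translation-invariant in time, so repeating the argument on $[t,T]$ in place of $[0,T]$ yields
\[
\mathcal{L}^1\bigl(\{\tau\in[t,T]:\xi^{\ast}(\tau)\notin\overline{B}_R\}\bigr)\leq M_R.
\]

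Next I would produce, for each $s\in[t,T]$, a reference time $s_0\in[t,s]$ with $s-s_0\leq M_R+1$ and $\xi^{\ast}(s_0)\in\overline{B}_R$: if $s-t\leq M_R+1$, take $s_0=t$, exploiting $\xi^{\ast}(t)=x\in\overline{B}_R$; otherwise apply the excursion estimate to the subinterval $[s-M_R-1,s]\subset[t,T]$, whose length exceeds $M_R$, to extract such an $s_0$. By the dynamic programming principle, $\xi^{\ast}|_{[s_0,T]}\in\Gamma^{\ast}_{s_0,T}(\xi^{\ast}(s_0))$ with $\xi^{\ast}(s_0)\in\overline{B}_R$, so Remark \ref{re400}(a) with $M=M_R+1$ yields
\[
\int_{s_0}^{(s_0+M_R+1)\wedge T}|\dot\xi^{\ast}(\tau)|^2\,d\tau\leq\kappa(R,M_R+1).
\]
A Cauchy-Schwarz estimate on $[s_0,s]$ then gives $|\xi^{\ast}(s)-\xi^{\ast}(s_0)|\leq\sqrt{(M_R+1)\kappa(R,M_R+1)}$, whence $|\xi^{\ast}(s)|\leq R+\sqrt{(M_R+1)\kappa(R,M_R+1)}=:\chi(R)$.

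The main obstacle I anticipate is the first step: verifying that the excursion bound transports cleanly from the initial-time-zero to the initial-time-$t$ formulation with constants depending only on $R$ (and not on $t$, $T$, or the specific optimizer). This should follow by mimicking the proof of Theorem \ref{4-3} on $[t,T]$, provided assumption \textbf{(F5)} continues to supply a common minimizer of $L_{m^T(s)}(\cdot,0)$ valid throughout the relevant time window. All remaining steps are a routine combination of Cauchy-Schwarz and the dynamic programming principle.
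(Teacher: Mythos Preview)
Your proposal is correct and follows essentially the same route as the paper (excursion bound plus energy estimate from Remark~\ref{re400}(a), combined via dynamic programming and Cauchy--Schwarz); the only difference is that the paper takes $s_0$ to be the last exit time $\sup\{\tau\leq s:\xi^{\ast}(\tau)\in\overline{B}_R\}$, which bypasses your case split and gives $s-s_0\leq M_R$ directly, yielding $\chi(R)=R+\sqrt{M_R\,\kappa(R,M_R)}$. Your caution about transporting Theorem~\ref{4-3} from initial time $0$ to initial time $t$ is well placed---the paper uses this extension silently.
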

\begin{proof}
Fix $x \in \overline{B}_{R}$. Let $\bar{t}\in[0,T]$ be such that $\xi^{*}(\bar{t},x)\in\mathbb{R}^{n} \backslash \overline{B}_{R}$. 
Define
\[
s_0=\sup\{s\in[0,\bar{t}\ ]: \xi^{*}(s)\in \overline{B}_{R}\}.
\]
Then, we have that
\begin{align*}
\left| \xi^{*}(\bar{t})-\xi^{*}(s_0)\right|&=\int_{s_0}^{t}{\left|\dot\xi^{*}(s)\right|\ ds} \leq\ M_{R}^{\frac{1}{2}}\left(\int_{s_0}^{t}{\left|\dot\xi^{*}(s)\right|^{2}\ ds}\right)^{\frac{1}{2}}\leq\ M_{R}^{\frac{1}{2}}\kappa(R, M_{R})^{\frac{1}{2}},
\end{align*}
where we have used H\"older's inequality and $\kappa(R, M_{R})$ is as in Remark \ref{re400}. Therefore, we get $\chi(R)=R+\left(M_{R}\kappa(R,M_{R})\right)^{\frac{1}{2}}$.
\end{proof}

\begin{proposition}[Attainable set from $K_0$]\label{mea}
 There exists a constant $R_1>0$ such that for each $T\geq 1$ the solution $(u^T,m^T)$ of system \eqref{lab1} satisfies
\begin{equation*}
\supp\left({m^{T}(t)}\right) \subset \overline{B}_{R_1}, \quad \forall t \in [0,T].
\end{equation*}
\end{proposition}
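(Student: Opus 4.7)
The plan is to exploit the representation \eqref{sol}, namely $m^{T}(s)=\phi(s,\cdot)\sharp m_{0}$, together with assumption \textbf{(M)}, which forces $\supp(m_0)\subset K_0\subset\overline{B}_{R_0}$. Since the push-forward of $m_0$ by $\phi(s,\cdot)$ is supported in $\phi(s, \supp(m_0))\subset \phi(s,K_0)$, it is enough to bound the image of $K_0$ under the optimal flow uniformly in $s\in[0,T]$ and $T\geq 1$.

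First I would unwind the definition of $\phi$: for each $x\in K_0$, the curve $s\mapsto \phi(s,x)$ coincides with $\xi^{\ast}(s):=e_s(\gamma^{\ast}(x))$, where $\gamma^{\ast}(x)\in \Gamma^{\ast}_{0,T}(x)$ is the measurable selection of minimizers of the action functional \eqref{infimum} with initial time $t=0$. In particular every point in $\supp(m^T(s))$ is of the form $\xi^{\ast}(s)$ for some $x\in K_0$ and some $\xi^{\ast}\in \Gamma^{\ast}_{0,T}(x)$.

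Next I would apply Corollary \ref{distance} with $R=R_0$ and $t=0$ (which is admissible since $R_0$ satisfies $R_0\geq R_0$ and $K_0\subset \overline{B}_{R_0}$). This yields a constant $\chi(R_0)>0$, independent of $T$ and of $x\in K_0$, such that
\[
\sup_{s\in[0,T]}|\xi^{\ast}(s)|\leq \chi(R_0)
\]
for every $\xi^{\ast}\in \Gamma^{\ast}_{0,T}(x)$, as long as $T>1$. Defining $R_1:=\chi(R_0)$ (adjusted, if necessary, to cover the boundary case $T=1$ by the trivial estimate from Lemma \ref{lemma} applied once on the single interval $[0,1]$), we conclude that $\phi(s,K_0)\subset \overline{B}_{R_1}$ for every $s\in[0,T]$, and hence $\supp(m^T(s))\subset \overline{B}_{R_1}$ for all $s\in[0,T]$ and all $T\geq 1$.

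The only subtlety is that Corollary \ref{distance} is derived from Lemma \ref{lemma}, whose proof of the uniform energy bound $\int_t^{(t+1)\wedge T}|\dot\xi^{\ast}|^2\,ds\leq \kappa(R)$ depends on the coercivity estimate ($g$) in Remark \ref{re2.1} together with the excursion bound of Theorem \ref{4-3}; there is therefore nothing new to do here besides applying those results with $R=R_0$. The constant $R_1$ is then intrinsic to the data (it depends only on $R_0$, $K_0$, the bounds on $L$ and $F$, and on $\|u^f\|_\infty$ via $c_0$), and in particular is independent of $T$, which is the whole content of the proposition.
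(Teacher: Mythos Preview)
Your proof is correct and follows essentially the same approach as the paper: use the representation $m^{T}(t)=\phi(t,\cdot)\sharp m_{0}$ together with assumption \textbf{(M)} to reduce to bounding $\phi(s,K_0)$, then apply Corollary~\ref{distance} with $R=R_0$ and set $R_1:=\chi(R_0)$. The paper's proof is just a two-line version of yours, without the additional commentary on the $T=1$ boundary case or the provenance of the constants.
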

\begin{proof}
Recall that $R_0$ has been fixed so that $K_0 \subset \overline{B}_{R_0}$ and $m^{T}(t)=\phi(t,\cdot)\sharp m_{0}$. So, by assumption {\bf(M)}, Corollary \ref{distance}, and the defintion of $\phi$  
 we conclude that 
 \begin{equation*}
\supp\left({m^{T}(t)}\right) \subset \overline{B}_{R_1}, \quad \forall T>0, \quad \forall t \in [0,T],
\end{equation*}
where  $R_1:=\chi(R_{0})$. 
\end{proof}

\subsection{Uniform Lipschitz continuity}

\begin{proposition}\label{lip}
Let $R\geq R_0$. Then, there exists a  constant $L_{R}>0$ such that for all $T> 1$
\begin{equation*}
|u^{T}(t,x+h)-u^{T}(t,x)| \leq L_{R}|h|, \quad \forall x, x+h \in \overline{B}_{R}, \ \forall t\in[0,T].
\end{equation*} 
\end{proposition}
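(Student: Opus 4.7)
The plan is to fix $t\in[0,T]$ and $x,x+h\in\overline{B}_R$, pick an optimizer $\xi^{*}\in\Gamma^{*}_{t,T}(x)$ of \eqref{infimum}, and use a carefully chosen absolutely continuous competitor $\tilde\xi$ starting at $x+h$ to bound $u^T(t,x+h)-u^T(t,x)$ by $L_R|h|$; the reverse inequality then follows by swapping the roles of $x$ and $x+h$. The key input is Lemma \ref{lemma}, which gives a $T$-uniform local energy bound $\int_t^{(t+1)\wedge T}|\dot\xi^{*}(s)|^2\,ds\le\kappa(R)$, and this forces me to split into two regimes according as $T-t\ge 1$ or $T-t<1$.

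\textbf{Regime $T-t\ge 1$.} Define the localized shift
\[
\tilde\xi(s)=\begin{cases}\xi^{*}(s)+(t+1-s)h,&s\in[t,t+1],\\ \xi^{*}(s),&s\in[t+1,T],\end{cases}
\]
so that $\tilde\xi(t)=x+h$, $\tilde\xi\equiv\xi^{*}$ on $[t+1,T]$, while on $[t,t+1]$ we have $|\tilde\xi-\xi^{*}|\le|h|$ and $|\dot{\tilde\xi}-\dot\xi^{*}|=|h|$. Using $\tilde\xi$ as a competitor in \eqref{infimum}, and noting that the contributions on $[t+1,T]$ and the terminal cost cancel,
\[
u^T(t,x+h)-u^T(t,x)\le\int_t^{t+1}\bigl[L_{m^T(s)}(\tilde\xi,\dot{\tilde\xi})-L_{m^T(s)}(\xi^{*},\dot\xi^{*})\bigr]\,ds.
\]
A first-order Taylor expansion of $L_{m^T(s)}$ in $(x,v)$ combined with the bounds ($e$)-($f$) of Remark \ref{re2.1} and the uniform bound $\|D_xF(\cdot,m)\|_\infty$ supplied by {\bf (F1)} dominates the integrand by $C\bigl(1+|\dot\xi^{*}(s)|^2+|h|^2\bigr)|h|$ for some $C=C(\alpha,\|DF\|_\infty)$. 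Integrating and invoking Lemma \ref{lemma} together with $|h|\le 2R$ yields a bound of the desired form.

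\textbf{Regime $T-t<1$.} Here I use the global shift $\tilde\xi(s)=\xi^{*}(s)+h$ on $[t,T]$, so $\dot{\tilde\xi}=\dot\xi^{*}$ and $\tilde\xi(T)=\xi^{*}(T)+h$. Testing with $\tilde\xi$,
\[
u^T(t,x+h)-u^T(t,x)\le\int_t^T\bigl[L_{m^T(s)}(\xi^{*}+h,\dot\xi^{*})-L_{m^T(s)}(\xi^{*},\dot\xi^{*})\bigr]\,ds+u^f(\xi^{*}(T)+h)-u^f(\xi^{*}(T)).
\]
By Taylor and Remark \ref{re2.1}, the integrand is pointwise dominated by $\bigl(\alpha(1+|\dot\xi^{*}|^2)+\|DF\|_\infty\bigr)|h|$. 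Since $T-t\le 1$ gives $(t+1)\wedge T=T$, Lemma \ref{lemma} yields $\int_t^T|\dot\xi^{*}|^2\,ds\le\kappa(R)$, while {\bf (U)} bounds the $u^f$-difference by ${\rm Lip}(u^f)|h|$. The same form of estimate follows.

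\textbf{Main obstacle.} The delicate point is extracting a constant $L_R$ independent of $T$. A naive constant shift on all of $[t,T]$ would introduce $\int_t^T|\dot\xi^{*}|^2\,ds$, which is not uniformly bounded in $T$; this is precisely why the deformation must be localized to the interval $[t,t+1]$ as soon as $T-t\ge 1$, and why the one-unit-time energy bound in Lemma \ref{lemma} is the right tool. Taking the maximum of the constants produced in the two regimes and using the symmetric estimate yields the uniform Lipschitz constant $L_R$, depending only on $R$, $\alpha$, $\kappa(R)$, $\|DF\|_\infty$, and ${\rm Lip}(u^f)$.
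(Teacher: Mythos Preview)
Your proposal is correct and follows essentially the same approach as the paper's proof: the same case split according to whether $T-t\ge 1$ or $T-t<1$, the same competitor curves (a localized shift $\xi^*(s)+(t+1-s)^+h$ in the first case and a global shift $\xi^*+h$ in the second), and the same combination of the derivative bounds from Remark~\ref{re2.1}, assumption {\bf (F1)}, and the unit-time energy estimate of Lemma~\ref{lemma}. The only cosmetic difference is that the paper splits the integrand into an $x$-variation term and a $v$-variation term and estimates them separately, whereas you do a joint first-order Taylor expansion; the resulting constants are of the same form.
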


\begin{proof}
Since $(u^{T}, m^{T})$ is the solution of \eqref{lab1}, we have that 	
	\begin{align*}
	\begin{cases}
	-\partial_{t} u^{T} + H(x, Du^{T})=F(x, m^{T}) \quad \text{in} \quad [0,T] \times \mathbb{R}^{n},  \\ u^{T}(T,x)=u^{f}(x) \quad \quad\quad\quad\quad\quad\quad\,\,\,\,  \text{in} \quad  \mathbb{R}^{n},
	\end{cases}
	\end{align*}
and
\begin{equation}\label{ggg}
u^{T}(t,x)=\inf_{\xi \in \Gamma_{t,T}(x)}\left\{\int_{t}^{T}{L_{m^{T}(s)}\left(\xi(s), \dot \xi(s)\right) ds} + u^{f}(\xi(T))\right\}.
\end{equation}
Fix $R\geq R_0$ and $x \in \overline{B}_{R}$. Let $h\in \mathbb{R}^{n}$ be such that $x+h \in \overline{B}_{R}$.
We consider two cases.

\medskip
\noindent {\bf C\footnotesize{ASE} 1}:
$ t \in [0,T-1)$. 

\medskip
\noindent Let $\xi^{*}\in\Gamma^*_{t,T}(x)$.  By testing with the curve 
$$\xi_{h}(s):=\xi^{*}(s)+(t+1-s)^{+}h\quad(s \in [t,T]),$$
we obtain the upper bound
\begin{align*}
u^{T}(t,x+h)-u^{T}(t,x)
\leq &\ \int_{t}^{t+1}{\left( L_{m^{T}(s)}(\xi_{h}(s), \dot\xi_{h}(s)) - L_{m^{T}(s)}(\xi^{*}(s), \dot\xi^{*}(s))\right)\ ds} \\
\leq &\ \underbrace{\int_{t}^{t+1}{\left( L_{m^{T}(s)}(\xi_{h}(s), \dot\xi_{h}(s)) - L_{m^{T}(s)}(\xi^{*}(s), \dot\xi_{h}(s))\right)\ ds}}_{A}\\
&\ +   \underbrace{\int_{t}^{t+1}{\left( L_{m^{T}(s)}(\xi^{*}(s), \dot\xi_{h}(s)) - L_{m^{T}(s)}(\xi^{*}(s), \dot\xi^{*}(s))\right)\ ds}}_{B}.
\end{align*}
We estimate term A first. 
Recall that, for any $m \in \mathcal{P}_{1}(\mathbb{R}^{n})$, $L_{m}$ is a strict Tonelli Lagrangian. By ($e$) in Remark \ref{re2.1} and (F1), there exists a constant $\alpha_1>0$ such that 
\begin{align}\label{4-100}
|D_{x}L_{m}(x,v)| \leq \alpha_1(1+|v|^{2}),\quad \forall (x,v) \in \mathbb{R}^{n} \times \mathbb{R}^{n},\ \forall m\in \mathcal{P}_{1}(\mathbb{R}^{n}).
\end{align}
Then, by Lemma \ref{lemma}, we get 
\begin{align*}
A  \leq & \int_{t}^{t+1}\int_{0}^{1}{|h|\left|  D_{x}L_{m^{T}(s)}\big(\lambda \xi_{h}(s) +(1-\lambda)\xi^{*}(s), \dot\xi_{h}(s)\big)\right|\ d\lambda ds}\\
\leq &\ \alpha_1 |h| \int_{t}^{t+1}{ \big( 1 + |\dot\xi_{h}(s)|^{2} \big)\ ds} = \ \alpha_1 |h| \int_{t}^{t+1}(1+|\dot{\xi}^*-h|^2)\ ds
\\
\leq &\ (3+2R)\alpha_1|h| \left( 1 + \int_{t}^{t+1}{|\dot\xi^{*}(s)|^{2}\ ds} \right) \leq \kappa'(R) |h|,
\end{align*}
where $\kappa'(R)=(3+2R)\alpha_1(1+\kappa(R))$ and $\kappa(R)$ is as in Lemma \ref{lemma}. 
Similarly,  by Lemma \ref{lemma} and ($d$) in Remark \ref{re2.1} we get 
$
B \leq  \kappa''(R) |h|, 
$
where $\kappa''(R)$ is a positive constant depending only on $R$.

\medskip
\noindent {\bf C\footnotesize{ASE} 2}: 
$t \in [T-1,T]$.

\medskip
\noindent Let $\xi^{*}\in\Gamma^*_{t,T}(x)$. Define the curve $\xi(s)=\xi^{*}(s)+h$, for $s \in [t,T]$. Then, we have that
\begin{align*}
& u^{T}(t,x+h)-u^{T}(t,x)\\
\leq &\ \int_{t}^{T}{\left( L_{m^{T}(s)}(\xi(s), \dot\xi(s)) - L_{m^{T}(s)}(\xi^{*}(s), \dot\xi^{*}(s))\right)\ ds}+ {\rm{Lip}}(u^{f})|h|.
\end{align*}
 To conclude the proof, we only need to estimate  
\begin{equation*} 
 \int_{t}^{T}{\left( L_{m^{T}(s)}(\xi(s), \dot\xi(s)) - L_{m^{T}(s)}(\xi^{*}(s), \dot\xi^{*}(s))\right)\ ds}.
 \end{equation*}
Again by Lemma \ref{lemma} and \eqref{4-100} we get
\begin{align*}
&\int_{t}^{T}{\left( L_{m^{T}(s)}(\xi(s), \dot\xi(s)) - L_{m^{T}(s)}(\xi^{*}(s), \dot\xi^{*}(s))\right)\ ds}\\
\leq &\ \int_{t}^{T}\int_{0}^{1}{|h|\left| D_{x}L_{m^{T}(s)}(\lambda h+\xi^{*}(s), \dot\xi^{*}(s))\right|\ d\lambda ds}\\
\leq &\ \alpha_1 |h| \left( 1 + \int_{t}^{T}{|\dot\xi^{*}(s)|^{2}\ ds} \right) \leq \kappa'''(R) |h|,
\end{align*}
where $\kappa'''(R)=\alpha_1(1+\kappa(R))$.
So, $u^{T}(t,x+h)-u^{T}(t,x) \leq L_{R}|h|$ with $L_R=\kappa'''(R)+{\rm{Lip}}(u^{f})$. This suffices to get the conclusion.
\end{proof}

\begin{corollary}\label{vel}
Let $R\geq R_0$.  Then there exists a  constant $\chi^{\prime}(R)>0$ such that for any $T>1$, any $0\leq t\leq T$, any $x \in \overline{B}_{R}$, and any $\xi^{*} \in \Gamma^{*}_{t,T}(x)$, we have that \begin{equation*}
\sup_{s \in [t,T]}\ |\dot\xi^{*}(s)| \leq \chi^{\prime}(R).
\end{equation*}
\end{corollary}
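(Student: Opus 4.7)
The plan is to combine the spatial Lipschitz regularity of $u^T$ from Proposition \ref{lip} with a time-Lipschitz bound derived from the HJ equation, and then exploit the quadratic growth of $L_{m}$ from Remark \ref{re2.1}(g) to extract a pointwise bound on $|\dot\xi^*(s)|$.

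I would first set $R':=\chi(R)+1$ with $\chi(R)$ as in Corollary \ref{distance}, so that $\xi^*([t,T])\subset\overline{B}_{R'}$. Proposition \ref{lip} then guarantees that $u^T(s,\cdot)$ is $L_{R'}$-Lipschitz on $\overline{B}_{R'}$, uniformly in $s\in[0,T]$ and $T>1$. Since $u^T$ is a viscosity solution of the HJ equation in \eqref{lab1}, the spatial Lipschitz bound and Rademacher's theorem imply that $u^T$ is differentiable a.e. in $[0,T]\times\overline{B}_{R'}$ and the equation $\partial_t u^T=H(x,Du^T)-F(x,m^T(t))$ holds pointwise at those points. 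Using $|D_xu^T|\leq L_{R'}$ a.e., continuity of $H$, and the bound $F_\infty:=\sup_{m\in\mathcal{P}_1(\mathbb{R}^n)}\|F(\cdot,m)\|_\infty<\infty$ from (F1), we obtain $|\partial_t u^T|\leq M$ a.e., where
\[
M:=\sup_{(x,p)\in\overline{B}_{R'}\times\overline{B}_{L_{R'}}}|H(x,p)|+F_\infty,
\]
and hence $u^T$ is jointly Lipschitz in $(t,x)$ on $[0,T]\times\overline{B}_{R'}$ with constant $\max\{L_{R'},M\}$.

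Next, for $s\in[t,T)$ and small $\epsilon>0$ with $s+\epsilon\leq T$, the dynamic programming principle together with the optimality of $\xi^*$ yield
\[
u^T(s,\xi^*(s))-u^T(s+\epsilon,\xi^*(s+\epsilon))=\int_s^{s+\epsilon}L_{m^T(\tau)}(\xi^*(\tau),\dot\xi^*(\tau))\,d\tau,
\]
while the joint Lipschitz estimate on $u^T$, combined with the triangle inequality, gives
\[
|u^T(s,\xi^*(s))-u^T(s+\epsilon,\xi^*(s+\epsilon))|\leq L_{R'}\int_s^{s+\epsilon}|\dot\xi^*(\tau)|\,d\tau+M\epsilon.
\]
Dividing by $\epsilon$ and letting $\epsilon\to 0^+$, using continuity of $\dot\xi^*$ (valid since $\xi^*$ is a classical $C^1$ solution of the Euler-Lagrange equation associated with the continuous time-dependent Lagrangian $L_{m^T(\cdot)}$), we obtain
\[
|L_{m^T(s)}(\xi^*(s),\dot\xi^*(s))|\leq L_{R'}|\dot\xi^*(s)|+M.
\]

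Finally, the quadratic lower bound $L_{m^T(s)}(\xi^*(s),\dot\xi^*(s))\geq\tfrac{1}{4\beta}|\dot\xi^*(s)|^2-\alpha-F_\infty$, obtained by applying Remark \ref{re2.1}(g) to $L$ and using the uniform bound on $F$, combined with the previous inequality gives the quadratic inequality
\[
\frac{1}{4\beta}|\dot\xi^*(s)|^2\leq L_{R'}|\dot\xi^*(s)|+M+\alpha+F_\infty,
\]
whose resolution yields the desired constant $\chi'(R)$, depending only on $R$ through $L_{R'}$ and $\chi(R)$; the bound at $s=T$ then follows by continuity. The main technical point is the extraction of the time-Lipschitz constant $M$ for the viscosity solution $u^T$: this relies on the spatial Lipschitz bound of Proposition \ref{lip} to force a.e. differentiability and pointwise validity of the HJ equation, from which an $L^\infty$ bound on $\partial_t u^T$ and hence a Lipschitz estimate in $t$ can be derived by a standard integration argument. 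All remaining steps are direct applications of results already established in the excerpt.
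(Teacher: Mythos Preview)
Your argument is essentially correct, with one imprecision: the sentence ``the spatial Lipschitz bound and Rademacher's theorem imply that $u^T$ is differentiable a.e.\ in $[0,T]\times\overline{B}_{R'}$'' is not justified as written, since Rademacher requires Lipschitz continuity in all variables. What you should invoke instead is the standing regularity $u^T\in W^{1,\infty}_{loc}([0,T]\times\mathbb{R}^n)$ from the definition of solution; this gives a.e.\ differentiability in $(t,x)$, the viscosity equation then holds pointwise a.e., and combined with the uniform bound $|D_xu^T|\leq L_{R'}$ from Proposition~\ref{lip} you obtain $|\partial_t u^T|\leq M$ a.e., uniformly in $T$. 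With this fix the rest of your argument goes through.

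The paper takes a different and shorter route. Rather than extracting a time-Lipschitz constant and bounding the Lagrangian along the trajectory, it passes to the Hamiltonian side: the dual arc $p^*(s)=D_vL_{m^T(s)}(\xi^*(s),\dot\xi^*(s))$ satisfies $p^*(s)=D_xu^T(s,\xi^*(s))$ by \cite[Theorem~6.4.8]{bib:SC}, so Corollary~\ref{distance} together with Proposition~\ref{lip} immediately gives $|p^*(s)|\leq c_R$ (the spatial Lipschitz constant of $u^T$ on $\overline{B}_{\chi(R)}$), and then $|\dot\xi^*(s)|=|D_pH(\xi^*(s),p^*(s))|$ is bounded via the linear growth of $D_pH$. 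This bypasses any discussion of time regularity of $u^T$ and does not use the quadratic coercivity of $L$. Your approach, by contrast, stays entirely on the Lagrangian side and is self-contained in that it does not appeal to the dual-arc identity from optimal control theory; the cost is the extra intermediate step of establishing the uniform time-Lipschitz bound.
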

\begin{proof}
Let $p^{*}$ be the dual arc of $\xi^{*}$, 
 that is, $p^{*}(s)=D_{v}L_{m^T(s)}(\xi^{*}(s), \dot\xi^{*}(s))$ for any $s \in [t,T]$.
Then the pair $(\xi^{*},p^{*})$ satisfies 
the maximum principle in Hamiltonian form
\begin{equation*}
\begin{cases}
\dot\xi^{*}(s)=D_{p}H_{m^T(s)}(\xi^{*}(s), p^{*}(s))
\\
\dot p^{*}(s)=-D_{x}H_{m^T(s)}(\xi^{*}(s), p^{*}(s))
\end{cases}
\quad(s \in [t,T])
\end{equation*}
Moreover, by \cite[Theorem 6.4.8]{bib:SC} we know that 
\begin{align*}
p^{*}(s)=D_{x}u^{T}(s,\xi^{*}(s)), \quad t<s\leq T.
\end{align*}

Now, observe that, in view of Corollary~\ref{distance}, $\xi^{*}(s)\in  \overline{B}_{\chi(R)}$ for all $s\in[t,T]$ and, on account of Proposition~\ref{lip}, $\{u^{T}(s, \cdot)\}_{s\in[t,T]}$ is equi-Lipschitz continuous on $\overline{B}_{\chi(R)}$. Therefore,  there exists a positive constant $c_{R}$, independent of $T$, such that $|p^*(s)| \leq c_{R}$ for every $s \in [t,T]$. 
 Consequently,
\begin{equation*}
\sup_{s \in [t,T]}\ |\dot\xi^{*}(s)|=\sup_{s \in [t,T]}|D_{p}H_{m^T(s)}(\xi^{*}(s), p^{*}(s))| \leq \alpha_2 \big( 1 + c_{R}\big):=\chi^{\prime}(R),
\end{equation*}
where the inequality follows from the fact that $D_pH_{m^T(s)}(x,p)\leq \alpha_2(1+|p|)$ for some $\alpha_2>0$ and all $(x,p)\in\mathbb{R}^n\times\mathbb{R}^n$ and  $s\in[t,T]$.
\end{proof}

\begin{remark}\em
	Owing to Corollary \ref{vel}, for any $T>1$ we have that $m^T:[0,T]\to  \mathcal{P}_{1}(\mathbb{R}^{n})$ is Lipschitz continuous.
	Indeed, by \eqref{sol} we deduce that, for all $s',\ s\in[0,T]$,
	\begin{align*}
		d_1(m^T(s'),m^T(s))\leq \int_{K_0}|\phi(s',x)-\phi(s,x)|\ dm_0\leq \chi'(R_0)|s'-s|,
	\end{align*}
	where $R_0$ is such that $K_0 \subset \overline{B}_{R_0}$.
\end{remark}

\medskip

\medskip
\subsection{Main result 2}
Before proving our main result, we show the following lemma. We recall that $R_1$ is the constant given by Proposition \ref{mea}.
\begin{lemma}\label{energy}
For any $R\geq R_{1}$ there exists a constant $ C(R)>0$, such that for any $T > 0$, and any $(\bar\lambda, \bar u, \bar m)\in \mathcal{S}$, the solution $(u^{T}, m^{T})$ of \eqref{lab1} satisfies
	\begin{equation*}
	\int_{0}^{T} \int_{\overline{B}_{R}}{\Big(F(x, m^{T}(t))- F(x, \bar m)\Big)d(m^{T}(t)-\bar m) dt} \leq \tilde C(R),
	\end{equation*}	
	where $R_1$ is as in Proposition \ref{mea}.
\end{lemma}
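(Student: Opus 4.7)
The plan is to apply the Lasry-Lions monotonicity argument, adapted to the present noncompact setting, to the functional
$$
\Phi(t) := \int_{\mathbb{R}^n}\bigl(u^T(t,x) - \bar u(x)\bigr)\,d(m^T(t) - \bar m)(x), \qquad t\in[0,T].
$$
The strategy is to show that $-\Phi'(t)$ dominates the integrand of interest pointwise in $t$, and that $\Phi$ is uniformly bounded in $t$ and $T$.

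First, I would compute $\Phi'(t)$ formally using all four equations: $-\partial_t u^T + H(x,Du^T) = F(x,m^T)$, the continuity equation for $m^T$, the ergodic Hamilton-Jacobi equation $H(x,D\bar u) = F(x,\bar m)+\bar\lambda$, and the stationary transport equation $\mathrm{div}(\bar m\, D_pH(x,D\bar u))=0$. After integrating by parts in the two transport equations, one obtains
\begin{align*}
\Phi'(t) = &\int\!\bigl[H(x,Du^T) - (Du^T-D\bar u)\cdot D_pH(x,Du^T)\bigr]\, dm^T \\
&- \int H(x,Du^T)\, d\bar m - \int F(x,m^T)\,d(m^T-\bar m).
\end{align*}
Convexity of $p\mapsto H(x,p)$ gives the bound $H(x,Du^T)-(Du^T-D\bar u)\cdot D_pH(x,Du^T)\le H(x,D\bar u)$; and the mirror inequality $H(x,Du^T) \ge H(x,D\bar u) + D_pH(x,D\bar u)\cdot(Du^T-D\bar u)$, integrated against $d\bar m$ together with the stationary equation tested against $u^T-\bar u$ (which makes the linear term vanish), yields $\int H(x,Du^T)\, d\bar m \ge \int H(x,D\bar u)\,d\bar m$. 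Substituting $H(x,D\bar u)=F(x,\bar m)+\bar\lambda$ and noting that the constant $\bar\lambda$ cancels (since $m^T$ and $\bar m$ are probability measures), one arrives at
$$
-\Phi'(t)\ \ge\ \int\bigl(F(x,m^T(t)) - F(x,\bar m)\bigr)\, d(m^T(t)-\bar m),
$$
which is precisely the lemma's integrand, and is nonnegative by assumption \textbf{(F3)}.

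Integrating from $0$ to $T$ then gives
$$
\int_0^T\! \int\! \bigl(F(x,m^T(t))-F(x,\bar m)\bigr)\, d(m^T(t)-\bar m)\, dt\ \le\ \Phi(0)-\Phi(T),
$$
and it remains to bound $|\Phi(t)|$ uniformly in $t$ and $T$. By Proposition~\ref{mea}, $\mathrm{supp}(m^T(t))\subset \overline B_{R_1}$; and from Step 1 of the proof of Theorem~\ref{MR1}, $\mathrm{supp}(\bar m)\subset K_0\subset \overline B_{R_0}\subset \overline B_{R_1}$. By Proposition~\ref{lip}, the functions $u^T(t,\cdot)$ are equi-Lipschitz on $\overline B_{R_1}$ with a constant $L_{R_1}$ independent of $T$ and $t$, while $\bar u$ is Lipschitz there as a viscosity solution of a strict Tonelli Hamilton-Jacobi equation. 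Since $\Phi(t)$ is invariant under adding a constant to $u^T-\bar u$, one may normalize $u^T(t,\cdot)-\bar u$ to vanish at a chosen point of $\overline B_{R_1}$, which gives $\|u^T(t,\cdot)-\bar u\|_{\infty,\overline B_{R_1}}\le 2R_1\bigl(L_{R_1}+\mathrm{Lip}(\bar u|_{\overline B_{R_1}})\bigr)$ and hence $|\Phi(t)|\le \tilde C(R_1)/2$. Since $m^T(t)-\bar m$ is supported in $\overline B_{R_1}\subset \overline B_R$ for any $R\ge R_1$, the integral over $\overline B_R$ coincides with the integral over $\mathbb{R}^n$, yielding the desired bound.

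The main technical obstacle is to rigorously justify the formal derivation of $\Phi'(t)$: both $u^T$ and $\bar u$ are only Lipschitz continuous, so the integration-by-parts identities, and in particular the testing of the stationary transport equation against the non-smooth function $u^T-\bar u$, are not immediate. The standard remedy, as in \cite{bib:CAR}, is to regularize both $u^T$ and $\bar u$ by convolution, perform the computation at the smooth level, and pass to the limit using the uniform Lipschitz estimate of Proposition~\ref{lip} together with the fact (Theorem~\ref{regularity}) that $\bar u$ is differentiable on the projected Mather set $\mathrm{supp}(\bar m)$ with locally Lipschitz gradient there; this last property is what enables the passage to the limit in the terms involving $D\bar u$ against $d\bar m$.
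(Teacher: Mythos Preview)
Your strategy is the paper's Lasry--Lions monotonicity argument, recast as the statement that $\Phi(t)=\int(u^T-\bar u)\,d(m^T-\bar m)$ is nonincreasing up to the desired integrand; the paper carries out the identical algebra in integrated form, bounding the left-hand side by four pieces $A,B,C,D$ arising from the two convexity inequalities and then controlling each via the two transport equations. Your endpoint bound on $\Phi(0)-\Phi(T)$ via Proposition~\ref{lip} and the normalization trick is equivalent to the paper's use of Poincar\'e's inequality on $\overline B_R$.

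The one substantive difference, and the place where your sketch is shakier than it looks, is the regularization. You propose to mollify $u^T$ and $\bar u$; the paper instead mollifies $\bar m$ to $\bar m^\epsilon=\xi^\epsilon\star\bar m$ (and sets $V^\epsilon=\bar m^{\epsilon,-1}\xi^\epsilon\star(\bar m\,D_pH(\cdot,D\bar u))$). This matters because $\bar m$ is in general singular with respect to Lebesgue measure---here it is supported on the minimizing set of $L_{\bar m}(\cdot,0)$ inside $K_0$---so quantities such as $\int Du^T\,d\bar m$ are not a priori defined, and the convergence $Du^T_\epsilon(x)\to Du^T(x)$ or $D\bar u_\epsilon(x)\to D\bar u(x)$ holds only at Lebesgue points, which need not cover $\mathrm{supp}(\bar m)$. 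Mollifying $\bar m$ avoids this entirely: every spatial integral is then against the smooth density $\bar m^\epsilon\,dx$, so Lipschitz test functions suffice by Rademacher, and the only error term is
\[
R_\epsilon=\int_0^T\!\!\int_{\overline B_R}\!\int_{\mathbb{R}^n}\xi^\epsilon(x-y)\,\bigl\langle D(u^T-\bar u)(x),\,D_pH(y,D\bar u(y))-D_pH(x,D\bar u(x))\bigr\rangle\,\bar m(dy)\,dx\,dt,
\]
which tends to zero after the change of variables $x=y+\epsilon z$ by dominated convergence and the continuity of $D\bar u$ on $\mathrm{supp}(\bar m)$ from Theorem~\ref{regularity}. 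That is where the Mather-set regularity is actually consumed---in controlling $R_\epsilon$---rather than in a direct limit of mollified gradients on the singular set $\mathrm{supp}(\bar m)$.
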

\begin{proof}
Fix $R\geq R_1$. Then, $K_{0} \subset \overline{B}_{R}$ and $\partial \overline{B}_{R} \cap K_{0} = \emptyset$. 
Let $\epsilon >0$ and let $\xi : \mathbb{R}^{n} \to \mathbb{R}$ be a smooth, nonnegative, symmetric kernel of integral one,  with support contained in the unit ball. Fix $(\bar\lambda, \bar u, \bar m)\in \mathcal{S}$ and define $\bar{m}^{\epsilon}:=\xi^{\epsilon} \star \bar{m}$ where $\xi^{\epsilon}(x)=\frac{1}{\epsilon^{n}}\xi(\frac{x}{\epsilon})$. 
Then 
\begin{align*}
& \int_{0}^{T} \int_{\overline{B}_{R}}\Big(F(x, m^{T}(t))- F(x, \bar m)\Big)d(m^{T}(t)-\bar m)\ dt\\
=& \int_{0}^{T} \int_{\overline{B}_{R}}{\Big(F(x, m^{T}(t))- F(x, \bar m)\Big)d(m^{T}(t)-\bar m^{\epsilon})\ dxdt}\\
& + \int_{0}^{T} \int_{\overline{B}_{R}}{\Big(F(x, m^{T}(t))- F(x, \bar m)\Big)d(\bar m^{\epsilon}-\bar m)\ dt}.
\end{align*}
Since $\bar m^{\epsilon} \to \bar m$ as $\epsilon \to 0$, we have that there exists $\bar\epsilon>0$ such that, for every $\epsilon \leq \bar\epsilon$,
\begin{equation*}
\int_{0}^{T} \int_{\overline{B}_{R}}{\Big(F(x, m^{T}(t))- F(x, \bar m)\Big)d(\bar m^{\epsilon}-\bar m)\ dt} \leq 1.
\end{equation*}
On the other hand, by the convexity of $H$, we obtain  
\begin{align*}
&\int_{0}^{T} \int_{\overline{B}_{R}}{\Big(F(x, m^{T}(t))- F(x, \bar m)\Big)d(m^{T}(t)-\bar m^{\epsilon})\ dxdt}\\
\leq &\ \int_{0}^{T} \int_{\overline{B}_{R}}{\Big(H(x,D\bar u)-H(x,Du^{T})-\Bigl\langle D_{p}H(x,Du^{T}), D(\bar u - u^{T}) \Bigl\rangle \Big)\ m^{T}(t)\ dxdt} \\
& + \int_{0}^{T}\int_{\overline{B}_{R}}{\Big( H(x,Du^{T})-H(x, D\bar u) - \Bigl\langle D_{p}H(x, D\bar u), D(u^{T}- \bar u) \Bigl\rangle \Big)\ \bar{m}^{\epsilon}\ dxdt} \\
& + \int_{0}^{T} \int_{\overline{B}_{R}}{\left(F(x, m^{T}(t))-F(x, \bar m)\right)\ (m^{T}(t)-\bar{m}^{\epsilon})\ dxdt}. 
\end{align*}
Recombining the terms on right hand-side of the above expression, we get  
\begin{align}\label{vv}
\begin{split}
&\int_{0}^{T} \int_{\overline{B}_{R}}{\Big(F(x, m^{T}(t))- F(x, \bar m)\Big)d(m^{T}(t)-\bar m^{\epsilon})\ dxdt}\\
\leq &\ \underbrace{\int_{0}^{T}\int_{\overline{B}_{R}}{\left( H(x, D\bar u(x)) - F(x, \bar m)\right)\ (m^{T}(t)-\bar m^{\epsilon})\ dxdt}}_{A}\\ 
&\ +  \underbrace{\int_{0}^{T}\int_{\overline{B}_{R}}{\Bigl\langle D\left(u^{T}(t,x)-\bar u(x)\right), D_{p}H(x, D\bar u(x)) \Bigl\rangle\ \bar m^{\epsilon}\ dxdt}}_{B} \\
&\ +\underbrace{\int_{0}^{T}\int_{\overline{B}_{R}}{\Bigl\langle D\left(u^{T}(t,x)-\bar u(x)\right), D_{p}H(x, D\bar u^{T}(t,x)) \Bigl\rangle\ \bar m^{T}(t)\ dxdt}}_{C}\\
&\ + \underbrace{\int_{0}^{T}\int_{\overline{B}_{R}}{-\left(H(x, Du^{T}(t,x))-F(x, m^{T}(t)) \right)\ (m^{T}(t)-\bar m^{\epsilon})\ dxdt}}_{D}.
\end{split}
\end{align}

In the following, we analyze each term on the right hand-side of \eqref{vv}.
Since $m^{T}(t), \bar{m}^{\epsilon}$ are probability measures and $B_{R_1}$, $K_{0} \subset \overline{B}_{R}$ for every $t \in [0,T]$, we have that
\begin{align}\label{a1}
	A=\int_{0}^{T} \int_{\overline{B}_{R}}{(H(x,D\bar u(x))-F(x,\bar m))\ (m^{T}(t)-\bar{m}^{\epsilon})\ dxdt}=0.
\end{align}
In order to study term $B$, define $$V^{\epsilon}:=\frac{\xi^{\epsilon} \star (\bar m D_{p}H(\cdot, D\bar u))}{\bar{m}^{\epsilon}}.$$ Then, we have that $-\text{div}(\bar{m}^{\epsilon}V^{\epsilon})=0$ in $\mathbb{R}^{n}$. We multiply this equality by $u^{T}(t,x)-\bar u(x)$ and integrate on $(0,T) \times \overline{B}_{R}$, to obtain
\begin{align}\label{dd}
\begin{split}
0 =& \int_{0}^{T} \int_{\overline{B}_{R}}{ \Bigl\langle D(u^{T}(t,x)-\bar u(x)), V^{\epsilon}(x) \Bigl\rangle\ \bar{m}^{\epsilon}\ dxdt}\\
&-\int_{0}^{T}\int_{\partial\overline{B}_{R}} {\left(u^{T}(t,x)-\bar u(x)\right)\left\langle \hat\nu, V^{\epsilon}(x) \right\rangle\ \bar m^{\epsilon}\ dSdt},
\end{split}
\end{align}
where $\hat\nu(x)=\frac{x}{R}$ is the outward unit normal  to $\partial\overline{B}_{R}$. 
Since $\partial \overline{B}_{R} \cap \supp(\bar m) = \emptyset$, we get
\begin{equation*}
\int_{0}^{T}\int_{\partial\overline{B}_{R}} {\left(u^{T}(t,x)-\bar u(x)\right)\left\langle \nu, V^{\epsilon}(x) \right\rangle\ \bar m^{\epsilon}\ dxdt}=0.
\end{equation*}
Thus, \eqref{dd} can be rewritten as 
\begin{align}\label{rr}
0 &= \int_{0}^{T} \int_{\overline{B}_{R}}{ \Bigl\langle D(u^{T}(t,x)-\bar u(x)), D_{p}H(x,D\bar u(x)) \Bigl\rangle\ \bar{m}^{\epsilon}\ dxdt}+R_{\epsilon},
\end{align}
where 
\begin{align*} 
& R_{\epsilon}=\\
 &\ \int_{0}^{T}\int_{\overline{B}_{R}}\int_{\mathbb{R}^{n}}\xi^{\epsilon}(x-y)\Bigl\langle D\left(u^{T}(t,x)-\bar u(x)\right), D_{p}H(y, D\bar u (y))-D_{p}H(x, D\bar u(x))\Bigl\rangle\ \bar m(dy)dxdt.
\end{align*} 
\noindent
By the definition of $R_{\epsilon}$, we have that

\begin{align*}
|R_{\epsilon}| \leq & T \sup_{t \in [0,T]}\| D\left(u^{T}(t, \cdot)-\bar u(\cdot)\right)\|_{\infty, \overline{B}_{R}}\cdot\\
& \int_{\overline{B}_{R}}\int_{\mathbb{R}^{n}}\xi^{\epsilon}(x-y) \big|D_{p}H(y, D\bar u (y))-D_{p}H(x, D\bar u(x))\big| \bar m(dy)dx.
\end{align*}
We now prove that $R_{\epsilon}\to 0$ as $\epsilon \to 0$. First, observe that
the integral term on the right-hand side of the above inequality can be rewritten as follows
\begin{align*}
&\int_{\overline{B}_{R}}\int_{\mathbb{R}^{n}}\xi^{\epsilon}(x-y) \big|D_{p}H(y, D\bar u (y))-D_{p}H(x, D\bar u(x))\big|\ \bar m(dy)dx\\
=&\ \int_{\mathbb{R}^{n}}\int_{\overline{B}_{R}}\xi^{\epsilon}(x-y) \big|D_{p}H(y, D\bar u (y))-D_{p}H(x, D\bar u(x))\big|\ dx\bar m(dy)\\
\leq &\ \int_{\mathbb{R}^{n}}\int_{\mathbb{R}^{n}}\xi^{\epsilon}(x-y) \big|D_{p}H(y, D\bar u (y))-D_{p}H(x, D\bar u(x))\big|\ dx\bar m(dy)\\
= &\ \int_{\mathbb{R}^{n}}\int_{\mathbb{R}^{n}}\xi(z) \big|D_{p}H(y, D\bar u (y))-D_{p}H(y+\epsilon z, D\bar u(y+\epsilon z))\big|\ dz\bar m(dy).
\end{align*}
Since $D_{p}H(\cdot, D\bar u(\cdot))$ is bounded and $\xi$ has compact support, for some $C>0$ we have that  
\begin{align*}
\int_{\mathbb{R}^{n}}\xi(z) \big|D_{p}H(y, D\bar u (y))-D_{p}H(y+\epsilon z, D\bar u(y+\epsilon z))\big|\ dz \leq C.
\end{align*}
Moreover, by the continuity of $D\bar u$ on $\supp(\bar m)$, we deduce that for any $y \in \supp(\bar m)$
\begin{align*}
\int_{\mathbb{R}^{n}}\xi(z) \big|D_{p}H(y, D\bar u (y))-D_{p}H(y+\epsilon z, D\bar u(y+\epsilon z))\big|\ dz \to 0 \quad \text{as}\ \epsilon \to 0.
\end{align*}
Therefore, by Lebesgue's dominated convergence theorem, we get  
\begin{align*}
\int_{\mathbb{R}^{n}}\int_{\mathbb{R}^{n}}\xi(z) \big|D_{p}H(y, D\bar u (y))-D_{p}H(y+\epsilon z, D\bar u(y+\epsilon z))\big|\ dz\bar m(dy) \to 0 \quad \text{as}\ \epsilon \to 0.
\end{align*}
In conclusion, we have that
$B \leq CT|R_{\epsilon}|
$.
In particular, for any $T>0$ there exists $\epsilon_{T}>0$ such that 
$
B \leq 1
$ for all $\epsilon \leq \epsilon_{T}$.
Hereafter, for any $T>0$ fix $\epsilon=\epsilon_{T}\wedge\bar\epsilon$.

Finally, we  to bound $C+D$. 
By the continuity equation in system \eqref{lab1}, we deduce that $$\partial_{t}(m^{T}(t)-\bar{m}^{\epsilon})-\text{div}\Big(m^{T}(t)D_{p}H(x, Du^{T}(t,x))\Big)=0,$$ in the sense of distributions.
Multiplying this equality by $u^{T}-\bar u$ and integrating in space-time, we conclude that
\begin{align*}\label{a2}
0 = & \int_{\overline{B}_{R}}{\Big((u^{f}(x)-\bar u(x))(m^{T}(T)-\bar m^{\epsilon})-(u^{T}(0,x)-\bar u(x))(m_{0}-\bar m^{\epsilon})\Big)\ dx}\\
&+ \int_{0}^{T} \int_{\overline{B}_{R}}{-\Big(H(x,Du^{T}(t,x))-F(x,m^{T})\Big)(m^{T}(t)-\bar{m}^{\epsilon})\ dxdt} \\
& + \int_{0}^{T} \int_{\overline{B}_{R}}{\Bigl\langle D(u^{T}(t,x)-\bar u(x)), D_{p}H(x, Du^{T}(t,x))\Bigl\rangle\ m^{T}(t)dxdt}\\
&-\int_{0}^{T}\int_{\partial \overline{B}_{R}}{\left(u^{T}(t,x)-\bar u(x)\right)\left\langle \hat\nu, D_{p}H(x, D u^{T}(t,x)) \right\rangle\  m^{T}(t)\ dSdt},
\end{align*}
where $\hat\nu(x)=\frac{x}{R}$ is the outward unit normal  to $\partial\overline{B}_{R}$.
Again, since $\partial \overline{B}_{R} \cap \overline{B}_{R_1} = \emptyset$, the integral over $\partial \overline{B}_{R}$ is zero.
In addition, the first integral is uniformly bounded  with respect to $T$, because
\begin{align*}
	\left| \int_{\overline{B}_{R}}{(u^{f}(x)-\bar u(x))(m^{T}(T)-\bar{m}^{\epsilon})}\ dx\right| \leq 2 ( \|u^{f}\|_{\infty,\overline{B}_{R}} + \|\bar u\|_{\infty,\overline{B}_{R}}).
\end{align*}
Since $m_{0}$,  $\bar{m}^{\epsilon}$ are probability measures, by Poincar\'e's inequality we deduce that
\begin{align*}
& \left| \int_{\overline{B}_{R}}{(u^{T}(0,x)-\bar u(x))(m_{0}-\bar{m}^{\epsilon})\ dx} \right|\\
 \leq & \left| \int_{\overline{B}_{R}}{\left( u^{T}(0,x)- \frac{1}{\mathcal{L}^{n}(\overline{B}_{R})}\int_{\overline{B}_{R}}{u^{T}(0,y)\ dy}\right)\ (m_{0}-\bar{m}^{\epsilon}) dx } \right| 
 + \left| \int_{\overline{B}_{R}}{\bar u(x)(m_{0}-\bar{m}^{\epsilon})\ dx} \right|\\
 \leq & \ N(R)(\|Du^{T}(0,\cdot)\|_{\infty,\overline{B}_{R}} + \|\bar u\|_{\infty,\overline{B}_{R}}).
\end{align*}
for some constant $N(R)>0$. Therefore,  
\begin{equation*}
C+D \leq 2 \big( \|u^{f}\|_{\infty,\overline{B}_{R}} + \|\bar u\|_{\infty,\overline{B}_{R}}\big) +  N(R)\big(\|Du^{T}(0,\cdot)\|_{\infty,\overline{B}_{R}} + \|\bar u\|_{\infty,\overline{B}_{R}}\big).
\end{equation*}

In view of the above estimates on $A$, $B$, $C$, and $D$, we conclude that 
\begin{align*}
\int_{0}^{T} \int_{\overline{B}_{R}}{(m^{T}(t)-\bar{m}^{\epsilon})\left(F(x, m^{T}(t))-F(x, \bar m)\right)\ dxdt} \leq {C}(R),
\end{align*}
for some constant ${C}(R) >0 $.
\end{proof}

\begin{theorem}[Convergence of solutions of \eqref{lab1}]\label{MR2}
Let $(\bar\lambda, \bar u, \bar m)\in\mathcal{S}$. Let $R_1$ be as in Proposition \ref{mea}. Then for any $R>R_1$, there exists a constant $C(R)>0$ such that for every $T\geq 1$ the solution $(u^{T}, m^{T})$ of system \eqref{lab1} satisfies
        \begin{equation}\label{lab38}
	\sup_{t \in [0,T]} \Big\|\frac{u^{T}(t, \cdot)-\bar u (\cdot)}{T} + \bar\lambda\left(1-\frac{t}{T}\right) \Big\|_{\infty, \overline{B}_{R}} \leq \frac{C(R)}{T^{\frac{1}{n+2}}}, 
	\end{equation}
	\begin{equation}\label{lab39}
	\frac{1}{T}\int_{0}^{T}{\big\| F(\cdot, m^{T}(s))- F(\cdot, \bar m) \big\|_{\infty, \overline{B}_{R}} ds} \leq \frac{C(R)}{T^{\frac{1}{n+2}}}.
	\end{equation}
	\end{theorem}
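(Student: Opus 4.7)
The plan is to combine Lemma \ref{energy} with the strong-monotonicity inequality in \textbf{(F3)} and the uniform $x$-Lipschitz regularity from \textbf{(F1)} to get a quantitative rate on the coupling term $F(\cdot, m^T) - F(\cdot, \bar m)$, and then to sandwich $u^{T}(t,x) - \bar u(x) + \bar\lambda(T-t)$ between matching upper and lower bounds coming from dynamic programming against the stationary solution $(\bar\lambda, \bar u, \bar m)$. First, Lemma \ref{energy} together with the first inequality in \textbf{(F3)} gives, for every $R \geq R_{1}$,
\begin{equation*}
\int_{0}^{T} \|F(\cdot, m^{T}(t)) - F(\cdot, \bar m)\|_{L^{2}(\mathbb{R}^{n})}^{2}\, dt \leq C(R),
\end{equation*}
since $\supp(m^{T}(t)) \cup \supp(\bar m) \subset \overline{B}_{R}$ turns the measure integral of Lemma \ref{energy} into the full $\mathbb{R}^{n}$ integral of \textbf{(F3)}. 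By \textbf{(F1)} the difference $g(t,\cdot) := F(\cdot, m^{T}(t)) - F(\cdot, \bar m)$ is Lipschitz in $x$ with a constant uniform in $t$, so the technical interpolation proved in the Appendix yields
\begin{equation*}
\|g(t,\cdot)\|_{\infty, \overline{B}_{R}} \leq C(R)\, \|g(t,\cdot)\|_{L^{2}(\mathbb{R}^{n})}^{2/(n+2)}.
\end{equation*}
Inserting this and applying H\"older's inequality in time with exponents $n+2$ and $(n+2)/(n+1)$ yields
\begin{equation*}
\int_{0}^{T} \|g(t,\cdot)\|_{\infty, \overline{B}_{R}}\, dt \leq C(R)\, T^{(n+1)/(n+2)},
\end{equation*}
which is \eqref{lab39} after dividing by $T$.

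For \eqref{lab38} I would derive matching upper and lower bounds. Fixing $(t,x) \in [0,T] \times \overline{B}_{R}$ and an optimal trajectory $\xi^{*} \in \Gamma^{*}_{t,T}(x)$, the splitting $L_{m^{T}(s)} = L_{\bar m} + g(s,\cdot)$ combined with the domination $\bar u \prec L_{\bar m} + \bar\lambda$ gives
\begin{equation*}
u^{T}(t,x) \geq \bar u(x) - \bar\lambda(T-t) + (u^{f} - \bar u)(\xi^{*}(T)) - \int_{t}^{T} \|g(s,\cdot)\|_{\infty, \overline{B}_{\chi(R)}}\, ds,
\end{equation*}
where Corollary \ref{distance} ensures $\xi^{*}(s) \in \overline{B}_{\chi(R)}$, and so $(u^{f} - \bar u)(\xi^{*}(T))$ is uniformly bounded (both $u^{f}$ and $\bar u$ being Lipschitz). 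For the matching upper bound, Step 1 of the proof of Theorem \ref{MR1} produces a Mather point $\bar x \in K_{0}$ of $L_{\bar m}$ with $L_{\bar m}(\bar x, 0) = -\bar\lambda$ (by \eqref{rev}); testing $u^{T}(t,x)$ against the competitor that moves linearly from $x$ to $\bar x$ on $[t, t+1]$ and rests at $\bar x$ on $[t+1, T]$ gives the reverse inequality, up to the same uniform constant and the same time-integral of $\|g(s,\cdot)\|_{\infty, \overline{B}_{\chi(R)}}$. Combining the two bounds with the estimate from the first paragraph, applied with $R$ replaced by $\chi(R)$, yields $|u^{T}(t,x) - \bar u(x) + \bar\lambda(T-t)| \leq C(R)\, T^{(n+1)/(n+2)}$ uniformly in $(t,x)$, which is \eqref{lab38} after dividing by $T \geq 1$.

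The key obstacle is the quantitative interpolation invoked in the first step: converting the space-time $L^{2}$-bound of Lemma \ref{energy} into a time-averaged uniform bound with explicit polynomial rate. The exponent $1/(n+2)$ is precisely the Gagliardo--Nirenberg-type trade-off in dimension $n$ between the Lipschitz control in $x$ (coming from \textbf{(F1)}) and the $L^{2}$ control in $x$ (coming from Lemma \ref{energy} combined with \textbf{(F3)}); no compactness of the state space is available to shortcut it, which is why it has to be isolated as a separate appendix lemma. Once this is in place, the dynamic-programming sandwich is short, relying only on Corollary \ref{distance} for the uniform control of optimal trajectories and on the existence of a stationary Mather point in $K_{0}$ to construct the competitor for the upper bound.
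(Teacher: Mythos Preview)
Your argument is correct and follows the same overall architecture as the paper: establish \eqref{lab39} via the interpolation Lemma~\ref{leA} combined with H\"older in time and the $L^2$ bound coming from Lemma~\ref{energy} and {\bf (F3)}, then sandwich $u^{T}(t,x)-w(t,x)$ (with $w(t,x)=\bar u(x)-\bar\lambda(T-t)$) between matching bounds obtained by swapping competitors.

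There is one noteworthy difference in the upper bound for $u^{T}-w$. The paper tests $u^{T}(t,x)$ against the \emph{optimal} trajectory $\gamma^{*}$ of the stationary value function $w$, which forces it to control where $\gamma^{*}$ goes; this is done by invoking (the analogues of) Theorem~\ref{4-3} and Corollary~\ref{vel} for the $\bar m$-problem and then splitting the integral $\int_t^T|g(s,\gamma^{*}(s))|\,ds$ according to whether $\gamma^{*}(s)\in\overline{B}_R$ or not. Your choice of the explicit competitor that travels linearly to a Mather point $\bar x\in K_0$ with $L_{\bar m}(\bar x,0)=-\bar\lambda$ and then rests is more elementary: the curve stays in $\overline{B}_{2R}$ by construction, so no excursion-time estimate is needed, and the resting part contributes exactly $-\bar\lambda(T-t-1)+\int_{t+1}^{T}g(s,\bar x)\,ds$. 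One small imprecision: the lower bound you state is what one gets by testing $\xi^{*}$ in the value-function formula \eqref{ww} for $w$, not literally from the domination inequality $\bar u\prec L_{\bar m}+\bar\lambda$ (which yields the bound with $\bar u(\xi^{*}(T))$ and $\bar u(x)$ swapped); either inequality suffices since both leave a term bounded on $\overline{B}_{\chi(R)}$, but the cleaner route is the value-function one.
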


\begin{proof}
	Fix a radius $R>R_{1}$.
	Define
	\begin{equation*}
	w(t,x):=\bar u(x)-\bar\lambda (T-t), \quad \forall (x,t)\in \mathbb{R}^n\times[0,T].
	\end{equation*}
	Since $(\bar\lambda, \bar u, \bar m)$ is a solution of \eqref{lab2},
we have that $w$ is a viscosity solution of the following Cauchy problem
	\begin{align*}\label{cal}
	\begin{split}
	&\begin{cases}
	-\partial_{t} w + H(x, Dw)=F(x, \bar m) \quad \text{in} \quad (0,T)\times \mathbb{R}^{n},  \\ w(T,x)=\bar u (x) \quad \quad\quad\quad\quad\quad\,\,\,\,\,\,\,\,\text{in} \quad \mathbb{R}^{n}.
	\end{cases}
	\end{split}
	\end{align*}
	So, $w(t,x)$ can be represented as the value function of the following minimization problem
	\begin{equation}\label{ww}
	w(t,x)= \inf_{\gamma \in \Gamma_{t,T}(x)} \left\{\int_{t}^{T}{L_{\bar m}\left(\gamma(s), \dot\gamma(s)\right)\ ds} + \bar u(\gamma(T))\right\}, \quad \forall (x,t) \in \mathbb{R}^{n} \times [0,T].
	\end{equation}
Since $(u^{T}, m^{T})$ is a solution of \eqref{lab1}, then we get 	that
	\begin{align*}
	\begin{cases}
	-\partial_{t} u^{T} + H(x, Du^{T})=F(x, m^{T}) \quad \text{in} \quad (0,T) \times \mathbb{R}^{n},  \\ u^{T}(T,x)=u^{f}(x) \quad \quad\quad\quad\quad\quad\quad\,\,\,\,  \text{in} \quad  \mathbb{R}^{n}.
	\end{cases}
	\end{align*}

We prove  inequality \eqref{lab39} first. For any given $(x,t)\in \overline{B}_{R}\times [0,T]$, let $\gamma^{*}:[t,T]\to \mathbb{R}^n$ be a minimizer of  problem \eqref{ww}.   By Lemma \ref{leA} below and H\"{o}lder's inequality, we get
	\begin{align*}
	 \begin{split}
	 & \int_{t}^{T}{\| F(\cdot, m^{T}(s)) -F(\cdot, \bar m)\| _{\infty, \overline{B}_{R}} {\bf 1}_ {\overline{B}_{R}}(\gamma^{*}(s))\ \frac{ds}{T}} \\
	 \leq  & \ C((\|DF\|_\infty) \int_{t}^{T}{\| F(\cdot, m^{T}(s))- F(\cdot, \bar m) \|_{2, \overline{B}_{R}}^{\frac{2}{n+2}} {\bf 1}_{\overline{B}_{R}}(\gamma^{*}(s)) \frac{ds}{T}} \\
	  \leq & \ \frac{C(\|DF\|_\infty)}{T} \left( \int_{t}^{T}{\| F(\cdot, m^{T}(s)) -F(\cdot, \bar m) \|_{2,\overline{B}_{R}}^{2}\ ds} \right) ^{\frac{1}{n+2}} \left(\int_{t}^{T}{ {\bf 1}_{\overline{B}_{R}}(\gamma^{*}(s))\ ds } \right) ^{\frac{n+1}{n+2}} .
	  \end{split}
	  \end{align*} 
	  Now, by assumption {\bf (F3)} and Lemma \ref{energy} the term
	  $$
	   \left( \int_{t}^{T}{\| F(\cdot, m^{T}(s)) -F(\cdot, \bar m) \|_{2,\overline{B}_{R}}^{2}\ ds} \right) ^{\frac{1}{n+2}}
	   $$
	  is bounded by a constant depending only on $R$, while
	  $$
	  \left(\int_{t}^{T}{ {\bf 1}_{\overline{B}_{R}}(\gamma^{*}(s))\ ds } \right) ^{\frac{n+1}{n+2}}\leq T^{\frac{n+1}{n+2}}.
	  $$
	  Inequality \eqref{lab39} follows.
	  
	   Next, we  prove \eqref{lab38}. By the definition of $w$, we have that
	\begin{align}\label{lab43}
	\begin{split}
	& u^{T}(t,x)-w(t,x)
	\\ \leq &  \ \int_{t}^{T} {L_{m^{T}(s)}(\gamma^{*}(s), \dot\gamma^{*}(s))\ ds }+ u^{f}(\gamma^{*}(T)) - \int_{t}^{T} {L_{\bar m}(\gamma^{*}(s), \dot\gamma^{*}(s))\ ds} - \bar u(\gamma^{*}(T)) 
	\\=& \ u^{f}(\gamma^{*}(T)) - \bar u(\gamma^{*}(T)) + \int_{t}^{T}{ \left( F(\gamma^{*}(s), m^{T}(s)) - F(\gamma^{*}(s), \bar m) \right)\ ds}.
	\end{split}
	\end{align}
	By  \eqref{lab43}, we get 
	\begin{align*} 
	 \frac{ u^{T}(t,x) - w(t,x)}{T} \leq   \underbrace{\bigl|\frac{u^{f}(\gamma^{*}(T))-\bar u (\gamma^{*}(T))}{T}\bigl|}_{A} + \underbrace{\frac{1}{T}\int_{t}^{T}{\bigl|F(\gamma^{*}(s), m^{T}(s)) -F(\gamma^{*}(s), \bar m)\bigl| ds}}_{B}. 
	 \end{align*} 
	 Let us first consider term $B$. Note that \begin{align*} 
	& \frac{1}{T}\int_{t}^{T}{\bigl|F(\gamma^{*}(s), m^{T}(s))-F(\gamma^{*}(s), \bar m)\bigl|\ ds}\\ 
	 = &  \int_{t}^{T}{\bigl|F(\gamma^{*}(s), m^{T}(s)) -F(\gamma^{*}(s), \bar m)\bigl| {\bf 1}_{\overline{B}_{R}}(\gamma^{*}(s))\ \frac{ds}{T}} \\
	 & + \ \int_{t}^{T}{\bigl|F(\gamma^{*}(s), m^{T}(s))-F(\gamma^{*}(s), \bar m)\bigl|{\bf 1}_{\mathbb{R}^{n}\backslash \overline{B}_{R}}(\gamma^{*}(s))\ \frac{ds}{T}}. 
	 \end{align*}
	Since $F$ is bounded, by Theorem \ref{4-3} we know that the second integral on the right hand-side of the above equality goes to zero as $T\to \infty$. As for the first integral, observe that	 
	\begin{align*} 
	& \int_{t}^{T}{\bigl|F(\gamma^{*}(s), m^{T}(s)) -F(\gamma^{*}(s), \bar m)\bigl| {\bf 1}_{\overline{B}_{R}}(\gamma^{*}(s))\ \frac{ds}{T}} \\
	 \leq & \int_{t}^{T}{\| F(\cdot, m^{T}(s))- F(\cdot, \bar m)\|_{\infty, \overline{B}_{R}} {\bf 1}_{\overline{B}_{R}}(\gamma^{*}(s))\ \frac{ds}{T}}.
	 \end{align*} 
	Thus, we  estimate the term on the right-hand side of the above iniequality as we did above.

	  Now, we give a bound for $A$.  Since $\bar u$ and $u^f$ are Lipschitz continuous,  we deduce that both functions grow at most linearly at infinity, i.e., there exists a constant $N >0$ such that
	$$
	 \bar u(y), u^{f}(y) \leq\ N(1+ |y|), \quad \forall y \in \mathbb{R}^{n}.
	$$	  
	Also, by Corollary \ref{vel},  $\sup_{s\in[t,T]}|\dot\gamma^{*}(s)| \leq \chi'(R)$ for some
	constant $\chi'(R)>0$. 	  
	  Therefore, by Theorem \ref{4-3} there exists a positive constant $M_{R}$ such that for any $T>0$  we have that 
	 $$
	 |\bar u(\gamma^{*}(T))|, |u^{f}(\gamma^{*}(T))| \leq\ N(1+|\gamma^*(T)|) \leq\ N\Big(1+ (R+ \chi^{\prime}(R) M_{R})\Big).
	 $$
	   Thus, we conclude that $A \leq O(\frac{1}{T})$.
	So, combining \eqref{lab43} and \eqref{lab39}  we obtain 
	  \[
	  \frac{ u^{T}(t,x) - w(t,x)}{T} \leq\ 
	  \frac{C(R)}{T^{\frac{1}{n+2}}}.
	  \]
	  Moreover, for any given $(x,t)\in \overline{B}_{R}\times [0,T]$, let $\xi^{\ast}(\cdot)$ be a minimizer of  problem \eqref{infimum}. Since 
	$u^T$ is the value function of  \eqref{infimum}, we have that
	\begin{align}\label{lab42} 
	\begin{split}
	&w(t,x)-u^{T}(t,x)
	\\ \leq & \ \int_{t}^{T} {L_{\bar m}(\xi^{*}(s), \dot\xi^{*}(s))\ ds }+ \bar u(\xi^{*}(T)) - \int_{t}^{T} {L_{m^{T}(s)}(\xi^{*}(s), \dot\xi^{*}(s))\ ds} - u^{f}(\xi^{*}(T)) 
	\\=& \ \bar{u}(\xi^{*}(T)) - u^{f}(\xi^{*}(T)) + \int_{t}^{T}{ \left( F(\xi^{*}(s), \bar m) - F(\xi^{*}(s), m^{T}(s)) \right)\ ds}.
	\end{split}
	\end{align}
So, by almost the same arguments used above, one obtains
\[
	  \frac{w(t,x)- u^{T}(t,x)}{T} \leq\ 
	  \frac{C(R)}{T^{\frac{1}{n+2}}},
	  \]
which concludes the proof of  \eqref{lab38}.\end{proof}
\begin{remark}\em
	In view of Remark \ref{re} and the above proof, it is clear that the Theorem \ref{MR2} still holds true if assumption {\bf (F5)} is replaced by assumption {\bf (F5')}.
\end{remark}

\medskip

\appendix

\section{Appendix}

In this section, we first give the proof of ($ii$) of Theorem \ref{MR1} and then show  Lemma \ref{leA}, which was used in the proof of Theorem \ref{MR2}.

\subsection{Proof of ($ii$) of Theorem \ref{MR1} }

Let $(c(H_{\bar m_{1}}), \bar u_{1}, \bar m_{1})$, $(c(H_{\bar m_{2}}), \bar u_{2}, \bar m_{2})\in\mathcal{S}$, where $H_{\bar{m}_i}$ is defined in \eqref{hm} and $c(H_{\bar{m}_i})$ denotes the Ma\~n\'e critical value of $H_{\bar{m}_i}$, $i=1,2$. Let $\epsilon >0$, $\xi : \mathbb{R}^{n} \to \mathbb{R}$ be a smooth, nonnegative, symmetric kernel with a support contained in the unit ball and of integral one. Set $\xi^{\epsilon}(x)=\frac{1}{\epsilon^{n}}\xi(\frac{x}{\epsilon})$ for $i=1,2$. Define $m_{i}^{\epsilon}:=\xi^{\epsilon} \star \bar m_{i}$, i.e., the convolution of $\xi^{\epsilon}$ and $\bar m_{i}$, and
 
$$
V_{i}^{\epsilon}(x):=\frac{\xi^{\epsilon}(x) \star \big(\bar m_{i} D_{p}H(x, D\bar u_{i}(x))\big)}{m_{i}^{\epsilon}}.
$$ 
It is clear that $-\text{div}(m_{i}^{\epsilon}V_{i}^{\epsilon})=0$ in $\mathbb{R}^{n}$. We multiply this equality by $\bar u_{1} - \bar u_{2}$, integrate by parts and subtract the resulting formulas to get $$ \int_{\mathbb{R}^{n}}{\Bigl\langle D(\bar u_{1} - \bar u_{2}), m_{1}^{\epsilon}V_{1}^{\epsilon} - m_{2}^{\epsilon}V_{2}^{\epsilon}\Bigl\rangle\ dx} =0.$$ 
 Hence 
 \begin{align}\label{lab100}
 \begin{split}
 	0&=\int_{\mathbb{R}^{n}}{\Bigl\langle D(\bar u_{1} - \bar u_{2}), \xi^{\epsilon} \star (\bar m_{1}D_{p}H(\cdot, D\bar u_{1})-\bar m_{2}D_{p}H(\cdot, D\bar u_{2}))\Bigl\rangle \ dx}\\
& =\int_{\mathbb{R}^{n}}{\Bigl\langle D(\bar u_{1}-\bar u_{2}), m_{1}^{\epsilon}D_{p}H(x, D\bar u_{1})-m_{2}^{\epsilon}D_{p}H(x, D\bar u_{2}) \Bigl\rangle\ dx} + R_{\epsilon},
 \end{split}
 \end{align}
where we have defined 
\begin{align*} 
R_{\epsilon}:= &\ \int_{\mathbb{R}^{n}}\int_{\mathbb{R}^{n}}\xi^{\epsilon}(x-y)\Bigl\langle D(\bar u_{1}-\bar u_{2})(x), D_{p}H(y, D\bar u_{1}(y))-D_{p}H(x, D\bar u_{1}(x))\Bigl\rangle\ \bar m_{1}(dy)dx\\
&\ - \int_{\mathbb{R}^{n}}\int_{\mathbb{R}^{n}}\xi^{\epsilon}(x-y) \Bigl\langle D(\bar u_{1}-\bar u_{2})(x), D_{p}H(y, D\bar u_{2}(y))-D_{p}H(x, D\bar u_{2}(x)) \Bigl\rangle\ \bar m_{2}(dy)dx .
\end{align*} 
In particular, with almost the same considerations as in Lemma \ref{energy}, one can prove that $R_{\epsilon} \to 0$ as $\epsilon \to 0$.

Next, for $i=1,2$, we multiply $H_{\bar m_{i}}(x, D\bar u_{i})=c(H_{\bar m_{i}})$ by $(m_{1}^{\epsilon}- m_{2}^{\epsilon})$, integrate in space and subtract the second identity from the first one to get  $$\int_{\mathbb{R}^{n}}{\Big(H(x, D\bar u_1)-H(x, D\bar u_2)-F(x, \bar m_1) +F(x, \bar m_2)\Big)\ (m_{1}^{\epsilon}-m_{2}^{\epsilon})dx}=0.
$$ 
Now, combining the above equality with \eqref{lab100} we obtain 
\begin{align}\label{L}
\begin{split} 
-R_{\epsilon}
&= \ \int_{\mathbb{R}^{n}}{\Bigl\langle D(\bar u_{1}-\bar u_{2}), m_{1}^{\epsilon}D_{p}H(x, D\bar u_{1})-m_{2}^{\epsilon}D_{p}H(x, D\bar u_{2})\Bigl\rangle\ dx} \\ &- \int_{\mathbb{R}^{n}}\Big(H(x, D\bar u_{1})-H(x, D\bar u_{2})-F(x, \bar m_{1})+F(x, \bar m_{2})\Big)\ (m_{1}^{\epsilon}-m_{2}^{\epsilon})dx.  
\end{split}
\end{align} 
Hereafter, we denote $H(x, D\bar u_{i})$ by $H_{i}$  and $D_{p}H(x, D\bar u_{i})$ by $D_{p}H_{i}$ for $i=1,2$. Then, following Lasry-Lions \cite{bib:LL3}, by \eqref{L} $R_{\epsilon}$ can be recast as 
\begin{align*}
-R_{\epsilon} =& \int_{\mathbb{R}^{n}}\Big(H_{2}-H_{1}-\Bigl\langle DH_{1}, D(\bar u_{2} -\bar u_{1}) \Bigl\rangle\Big)\ m_{1}^{\epsilon}dx \\
&\ + \int_{\mathbb{R}^{n}}\Big(H_1-H_{2}-\Bigl\langle DH_{2}, D(\bar u_{1}-\bar u_{2})\Bigl\rangle\Big)\ m_{2}^{\epsilon}dx \\ 
&\ +\int_{\mathbb{R}^{n}}\Big(F(x, \bar m_1)-F(x,\bar  m_2)\Big)\ (m_{1}^{\epsilon}-m_{2}^{\epsilon})dx. 
\end{align*} 
Owing to the convexity of $H$ with respect to the second argument, the terms $H_2-H_1-\langle DH_{1}, D(\bar u_{2}-\bar u_{1}) \rangle $ and $H_1-H_2-\langle DH_{2}, D(\bar u_{1}-\bar u_{2}) \rangle $ are nonnegative. So we get 
 $$ 
 \int_{\mathbb{R}^{n}}\Big(F(x, \bar m_1)-F(x, \bar m_2)\Big)\ (m_{1}^{\epsilon}-m_{2}^{\epsilon})dx \leq -R_{\epsilon}.
 $$ 
 Letting $\epsilon \to 0$ we have that 
 $$ 
 \int_{\mathbb{R}^{n}}{\Big(F(x, \bar m_{1})- F(x, \bar m_{2})\Big)d(\bar m_{1}-\bar m_{2})} \leq 0,$$ which together with assumption {\bf (F3)} implies  that $$ \int_{\mathbb{R}^{n}}{\Big(F(x, \bar m_{1})-F(x, \bar m_{2})\Big)^{2} dx} \leq 0.
 $$
Hence, for every $x \in \mathbb{R}^{n}$, we have that $F(x,\bar m_{1})=F(x,\bar m_{2})$ and then, since $H_{\bar m_{1}}=H_{\bar m_{2}}$, we deduce that $c(H_{\bar m_1})=c(H_{\bar m_2})$.
\begin{equation*}
\eqno{\square}
\end{equation*}


\subsection{A technical lemma}
\begin{lemma}\label{leA} 
	For any $n \geq1$ and $D \geq 0$ there exists a constant $c=c(n,D)$ such that any Lipschitz continuous function $f: \mathbb{R}^{n} \to \mathbb{R}$, with ${\rm{Lip}}(f) \leq D$, satisfies 
\begin{equation}\label{lab17}
	 \|f\|_{\infty} \leq c(n,D) \|f\|_{2}^{\frac{2}{n+2}}.
\end{equation}
\end{lemma}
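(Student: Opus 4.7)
The plan is to prove \eqref{lab17} by a simple geometric argument: exploit the Lipschitz bound to propagate a near-maximal value of $|f|$ to a ball on which $|f|^{2}$ has a controlled integral, and then solve for $\|f\|_{\infty}$ in the resulting inequality.

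First I would dispose of trivialities. If $D=0$ then $f$ is constant, and either $f\equiv 0$ (in which case both sides of \eqref{lab17} are zero) or $\|f\|_{2}=+\infty$ (and the inequality holds with any constant). So we may assume $D>0$ and, after ruling out the case $\|f\|_{\infty}=0$, also $\|f\|_{\infty}>0$. Fix $\varepsilon\in(0,1)$ and choose $x_{0}\in\mathbb{R}^{n}$ with
\[
|f(x_{0})|\geq (1-\varepsilon)\|f\|_{\infty}.
\]

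Next I would use the Lipschitz estimate to transfer this lower bound to a full ball. Since $\mathrm{Lip}(f)\leq D$, for every $x\in\mathbb{R}^{n}$ we have
\[
|f(x)|\geq |f(x_{0})|-D|x-x_{0}|.
\]
Set $r:=|f(x_{0})|/(2D)$. Then on $B(x_{0},r)$ we have $|f(x)|\geq |f(x_{0})|/2$. Hence
\[
\|f\|_{2}^{2}\geq \int_{B(x_{0},r)}|f(x)|^{2}\,dx\geq \Big(\frac{|f(x_{0})|}{2}\Big)^{2}\omega_{n}r^{n}=\frac{\omega_{n}}{2^{n+2}D^{n}}|f(x_{0})|^{n+2},
\]
where $\omega_{n}=\mathcal{L}^{n}(B(0,1))$.

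Rearranging yields
\[
|f(x_{0})|\leq \Big(\frac{2^{n+2}D^{n}}{\omega_{n}}\Big)^{\frac{1}{n+2}}\|f\|_{2}^{\frac{2}{n+2}}.
\]
Since $|f(x_{0})|\geq (1-\varepsilon)\|f\|_{\infty}$ and $\varepsilon\in(0,1)$ is arbitrary, letting $\varepsilon\to 0^{+}$ gives \eqref{lab17} with the explicit constant
\[
c(n,D)=\Big(\frac{2^{n+2}D^{n}}{\omega_{n}}\Big)^{\frac{1}{n+2}}.
\]
There is no real obstacle here; the only point requiring a moment's care is the degenerate case $D=0$ and the choice of radius $r$ that simultaneously uses the Lipschitz bound efficiently and yields the correct exponent $\frac{2}{n+2}$ in the final inequality.
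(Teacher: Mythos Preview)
Your proof is correct and follows essentially the same idea as the paper's: use the Lipschitz bound to control $|f|$ on a small neighborhood of a near-maximizer and compare with the $L^{2}$ norm, so that the exponent $\tfrac{2}{n+2}$ comes out of the scaling. The only cosmetic difference is that the paper works with cubes and leaves the side length $\delta$ free, obtaining $\tfrac12\|f\|_{\infty}^{2}\leq 2^{-n}\delta^{-n}\|f\|_{2}^{2}+nD^{2}\delta^{2}$ and then minimizing over $\delta$, whereas you pick the optimal radius $r=|f(x_{0})|/(2D)$ from the start and get an explicit constant directly.
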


\begin{proof}
Observe, first, that \eqref{lab17} is trivial if $f \not\in L^{2}(\mathbb{R}^{n})$.

For any $\delta >0$ and any $x^0 \in \mathbb{R}^n$, we set
    \[
     Q(x^{0},\delta)=\{ x \in \mathbb{R}^{n}: \max_{1\leq i\leq n}|x_{i}-x_{i}^{0}| \leq \delta \}.
     \]
     Then, for any such cube $Q(x^{0},\delta)$ , we have that
     $$
   |f(x)| \leq |f(y)| + D\delta \sqrt{n}, \quad  \forall x,y \in Q(x^{0},\delta).
   $$
	Taking the square of both sides and integrating over $Q(x^{0},\delta)$, we obtain
	 $$ \int_{Q(x^{0},\delta)} {|f(x)|^{2} dy} \leq 2\int_{Q(x^{0},\delta)}{\Big( |f(y)|^{2} + D^{2}\delta^{2}n \Big) dy}, $$ 
	 which implies $$2^{n}\delta^{n} |f(x)|^{2} \leq 2 \int_{Q(x^{0},\delta)}{|f(y)|^{2} dy} + 2^{n+1}D^{2}n\delta^{n+2}.$$ Taking the supremum over the cube $Q$ we have that $$\frac{1}{2} \sup_{x \in Q(x^{0},\delta)} |f(x)|^{2} \leq \frac{1}{2^n \delta^{n}} \int_{Q(x^{0},\delta)}{|f(y)|^{2}dy} +2^n D^{2}n\delta^{2} \leq \frac{1}{2^n \delta^{n}} \int_{\mathbb{R}^{n}}{ |f(y)|^{2}dy} +2^n nD^{2}\delta^{2}.$$ 
	 Since $x^{0}$ may be taken arbitrarily in $\mathbb{R}^{n}$, we deduce that  $$ \frac{1}{2} \sup_{x \in \mathbb{R}^{n}}|f(x)|^{2} \leq \frac{1}{2^n \delta^{n}} \int_{\mathbb{R}^{n}}{|f(y)|^{2}dy} +nD^{2}\delta^{2}=:G(\delta).$$ 
	 Taking the minimum of $G(\delta)$ for $\delta \in (0, +\infty)$, yields the conclusion.
	 \end{proof}
	 
The following example shows that the exponent in \eqref{lab17} is optimal, in the sense that such a bound would be false for any exponent $\theta > \frac{2}{n+2}$.

\begin{example}
Consider a function on $\mathbb{R}$ defined by
\begin{align*}
f(t):=
	&\begin{cases}
	\frac{1}{k}-t,\quad t \in [0, \frac{1}{k}],\\ 
	0,\quad \text{otherwise},
	\end{cases}
	\end{align*}
for some $k\in\mathbb{N}$. Then, we have
\begin{align*}
\|f\|_{\infty} =\frac{1}{k},
\quad  \|f\|_{2}=\left(\int_{0}^{\frac{1}{k}}{(\frac{1}{k}-t)^{2}dt}\right)^{\frac{1}{2}}=\left(\frac{1}{3}\right)^{\frac{1}{2}}\left(\frac{1}{k}\right)^{\frac{3}{2}}.
\end{align*}
Since $n=1$, then 
 \[
 \|f\|^{\frac{2}{n+2}}_{2}=\left(\frac{1}{3}\right)^{\frac{1}{3}}\frac{1}{k},
 \]
which implies that the estimate \eqref{lab17} is optimal.
\end{example}


\medskip
\medskip

\noindent {\bf Acknowledgements:}
Piermarco Cannarsa was partly supported by Istituto Nazionate di Alta Matematica (GNAMPA 2018 Research Projects) and by the MIUR Excellence Department Project awarded to the Department of Mathematics, University of Rome Tor Vergata, CUP E83C18000100006. 
Wei Cheng was partly supported by Natural Scientific Foundation of China (Grant No. 11871267, No. 11631006 and No.11790272). Kaizhi Wang was partly supported by National Natural Science Foundation of China (Grant No. 11771283) and by China Scholarship Council (Grant No. 201706235019).


\end{document}